\numberwithin{equation}{section}
\declaretheorem[Refname={Theorem,Theorems}]{theorem}
\numberwithin{theorem}{section} 
\declaretheorem[style=theorem,numberlike=theorem,Refname={Example,Examples}]{example}
\declaretheorem[style=definition,numberlike=theorem,Refname={Remark,Remarks}]{remark}
\declaretheorem[name=Proposition,numberlike=theorem,Refname={Proposition,Propositions}]{proposition}
\DeclarePairedDelimiterX\Set[2]{\lbrace}{\rbrace}%
{ #1 \,:\, #2 }                                         
\newcommand{\R}{\mathbb{R}} 
\newcommand{\N}{\mathbb{N}} 
\newcommand{\Z}{\mathbb{Z}} 
\newcommand{\C}{\mathbb{C}} 
\newcommand{\rev}[1]{{\color{black}{#1}}}
\title{\Large \textbf{Orthonormal Expansions for Translation-Invariant Kernels}}
\author{Filip Tronarp$^1$ --- Toni Karvonen$^2$}
\date{
  {
    \normalsize
    $^1$Centre for Mathematical Sciences, Lund University, Sweden \\
    $^2$Department of Mathematics and Statistics, University of Helsinki, Finland \\
  }
  \vspace{0.4cm}
  \today
}
\begin{document}

\maketitle

\begin{abstract}
  \noindent
  We present a general Fourier analytic technique for constructing orthonormal basis expansions of translation-invariant kernels from orthonormal bases of $\mathscr{L}_2(\mathbb{R})$.
  This allows us to derive explicit expansions on the real line for (i) Mat{\'e}rn kernels of all half-integer orders in terms of associated Laguerre functions, (ii) the Cauchy kernel in terms of rational functions, and (iii) the Gaussian kernel in terms of Hermite functions.
  \\
  \\
  \textbf{Keywords:} positive-definite kernels, radial basis functions, orthonormal expansions, orthogonal polynomials
  \\
  \\
  \textbf{MSC2020:} 65D12, 46E22, 33C45, 60G10
\end{abstract}

\section{Introduction}
\label{sec:introduction}

Let $\Omega$ be a vector space.
A symmetric positive-semidefinite kernel $r \colon \Omega \times \Omega \to \R$ is \emph{translation-invariant} if $r(t, u) = \Phi(t - u)$ for some $\Phi \colon \Omega \to \R$ and all $t, u \in \Omega$.
Translation-invariant kernels, also known as \emph{stationary} or \emph{shift-invariant} kernels, are a mainstay of radial basis function interpolation~\cite{Wendland2005} and Gaussian process modelling as used in, for example, spatial statistics~\cite{Stein1999} and machine learning~\cite{RasmussenWilliams2006}.
Each positive-semidefinite kernel induces a unique reproducing kernel Hilbert space (RKHS), $\mathscr{H}_r(\Omega)$, which is equipped with an inner product $\langle \cdot, \cdot \rangle_r$ and the associated norm $\norm[0]{\cdot}_r$~\rev{\citep[e.g.,][Section~2.2]{Paulsen2016}}.
Practically every commonly used kernel induces an infinite-dimensional RKHS that is separable (see~\cite{OwhadiScovel2017} for a short review on separability of RKHSs), which means that $\mathscr{H}_r(\Omega)$ has an orthonormal basis $\{\psi_m\}_{m \in I}$ for some countably infinite index set $I$ (e.g., $I = \N$) and that the kernel admits the pointwise convergent orthonormal expansion
\begin{equation}
  \label{eq:kernel-expansion-intro}
  r(t, u) = \sum_{m \in I} \psi_m^*(t) \psi_m(u) \quad \text{ for all } \quad t, u \in \Omega,
\end{equation}
\rev{where $z^*$ denotes the complex conjugate of $z \in \C$.}
If $\Omega$ is a compact subset \rev{of the Euclidean space} $\R^d$ and $r$ is continuous, the expansion~\eqref{eq:kernel-expansion-intro} converges uniformly~\citep[Section~11.3]{Paulsen2016}.
Expansions of the form~\eqref{eq:kernel-expansion-intro} are often needed to develop reduced rank methods of sub-cubic computational complexity~\cite{RahimiRecht2007,SolinSarkka2020}, to improve numerical stability~\cite{FasshauerMcCourt2012}, and for various theoretical purposes~\cite[e.g.,][]{Karvonen2022, Steinwart2019}.

However, few orthonormal expansions appear to have been constructed for translation-invariant kernels.
To the best of our knowledge, the Matérn-$\frac{1}{2}$ kernel $r(t, u) = \exp(-\lambda \abs[0]{t - u})$ and the Gaussian kernel $r(t, u) = \exp(-\frac{1}{2}\lambda^2(t-u)^2)$ on subsets of $\R$ are the only commonly used translation-invariant kernels for which orthonormal expansions have been found.
For various expansions of the Matérn-$\frac{1}{2}$ kernel, see Section~4 in \cite{Hawkins1989}, Section~3.4.1 in \cite{VanTrees2001}, Example~4.1 in \cite{Xiu2010}, and Example~2.5 and Appendix~A.2 in \cite{FasshauerMcCourt2015}.
For the Gaussian kernel both a simple non-Mercer expansion based on a Taylor expansion of the exponential function~\cite[e.g.,][]{Minh2010} and a class of Mercer expansions~\cite[Section~12.2.1]{FasshauerMcCourt2015}, which appear to have originated in~\cite[Section~4]{Zhu1998}, are available.
A large collection of expansions for kernels which are not translation-invariant can be found in~\cite[Appendix~A]{FasshauerMcCourt2015}.
In this article we describe a general and conceptually simple Fourier analytic technique, contained in Theorem~\ref{thm:main-theorem}, for constructing orthonormal bases for translation-invariant kernels on $\R$ out of orthonormal bases of $\mathscr{L}_2(\R)$.
We then use this technique to compute orthonormal expansions for three commonly used classes of kernels.

Ours is what one could call a \emph{kernel-centric} approach.
That is, our starting point is a kernel that has, in some sense, desirable or intuitive properties and our goal is to \rev{find} its orthonormal expansion.
The \emph{space-centric} approach is to start with a Hilbert space or its orthonormal basis, show that this space is an RKHS, and construct its reproducing kernel via~\eqref{eq:kernel-expansion-intro}; under fortuitous circumstances the kernel is available in closed form.
A prime example of this approach is how Korobov spaces and their kernels, which can be expressed in terms of Bernoulli polynomials, are used in the quasi-Monte Carlo literature~\cite[e.g.,][Section~5.8]{DickKuoSloan2013}.
Other examples include Hardy spaces~\cite[Section~1.4.2]{Paulsen2016}, power series kernels~\cite{Zwickngaslc2009}, and Hermite spaces~\cite{IrrgeherLeobacher2015}.
Our technique to construct orthonormal bases is similar to the method in~\cite{NovakUllrich2018}, where the goal is however to find a closed form expression for the reproducing kernel of a Hilbert space.

\subsection{Construction of orthonormal bases}

\rev{Let $\abs[0]{z}$ denote the modulus of $z \in \C$ and recall that $z^*$ is the complex conjugate.}
The spaces $\mathscr{L}_2(\R)$ and $\mathscr{L}_2(\R, 1/2\pi)$ consist of all square-integrable functions $f \colon \R \to \C$ and are equipped with the inner products
\begin{equation*}
  \langle f, g \rangle_{\mathscr{L}_2(\R)} = \int_{-\infty}^\infty f^*\!(t) g(t) \dif t \quad \text{ and } \quad \langle f, g \rangle_{\mathscr{L}_2(\R, 1/2\pi)} = \frac{1}{2\pi} \int_{-\infty}^\infty f^*\!(t) g(t) \dif t.
\end{equation*}
The Fourier transform and the corresponding inverse transform for any integrable or square-integrable function $f$ are defined as
\begin{equation*}
\hat{f}(\omega) = \int_{-\infty}^\infty f(t) e^{-i\omega t} \dif t \quad \text{ and } \quad f(t) = \frac{1}{2\pi} \int_{-\infty}^\infty \hat{f}(\omega) e^{i\omega t}\dif \omega.
\end{equation*}
The Fourier transform defines an isometry from $\mathscr{L}_2(\mathbb{R})$ to $\mathscr{L}_2(\mathbb{R},1/2\pi)$ via the Plancherel theorem
\begin{equation*}
\int_{-\infty}^\infty f^*\!(t) g(t) \dif t = \frac{1}{2\pi} \int_{-\infty}^\infty \hat{f}^*\!(\omega) \hat{g}(\omega)  \dif \omega.
\end{equation*}
The functions $f$ and $\hat{f}$ are referred to as time domain and Fourier domain representations, respectively.
Our $\mathscr{H}_r(\R)$-orthonormal expansions are derived from the following rather straight-forward theorem.
Let $I$ be a countably infinite index set, typically either $\N$ or $\Z$.

\begin{theorem}[Construction of orthonormal bases]
  \label{thm:main-theorem}
  \rev{Suppose that $r(t, u) = \Phi(t - u)$ is a translation-invariant symmetric positive-definite kernel with $\Phi \in C(\R) \cap \mathscr{L}_1(\R)$.
  Let $\{\varphi_m\}_{m \in I}$ be an orthonormal basis of $\mathscr{L}_2(\R)$ and $h$ a function such that $\abs[0]{\hat{h}(\omega)} = \hat{\Phi}(\omega)^{1/2}$.}
  Then the functions
  \begin{equation*}
    \psi_m(t) = \int_{-\infty}^\infty h(t - \tau) \varphi_m(\tau) \dif \tau \quad \text{ with Fourier transforms } \quad \hat{\psi}_m(\omega) = \hat{h}(\omega) \hat{\varphi}_m(\omega)
  \end{equation*}
  for $m \in I$ form an orthonormal basis of $\mathscr{H}_r(\R)$ and the kernel $r$ has the pointwise convergent expansion
  \begin{equation} \label{eq:main-theorem-r-expansion}
    r(t, u) = \sum_{m \in I} \psi_m^*(t) \psi_m(u) \quad \text{ for all } \quad t, u \in \R.
  \end{equation}
\end{theorem}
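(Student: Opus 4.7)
The plan is to reduce everything to the Fourier domain, where the convolution operator $T\varphi = h * \varphi$ acts as multiplication by $\hat{h}$ and the RKHS inner product becomes an $\mathscr{L}_2$-type integral weighted by $1/\hat{\Phi}$. Under this map the theorem essentially reduces to Plancherel plus the identity $\abs[0]{\hat{h}}^2 = \hat{\Phi}$.

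The first step is to invoke the standard Fourier characterization of the RKHS of a translation-invariant kernel: since $\Phi \in C(\R)\cap \mathscr{L}_1(\R)$ is symmetric and positive-definite, Bochner's theorem gives $\hat{\Phi}\geq 0$ and $\hat{\Phi}\in\mathscr{L}_1(\R)$, and
\[
\mathscr{H}_r(\R) = \Set{f\in\mathscr{L}_2(\R)}{\hat{f}/\hat{\Phi}^{1/2}\in\mathscr{L}_2(\R,\, 1/2\pi)}, \qquad \langle f,g\rangle_r = \frac{1}{2\pi}\int_{-\infty}^{\infty}\frac{\hat{f}^*\!(\omega)\hat{g}(\omega)}{\hat{\Phi}(\omega)}\dif\omega,
\]
with the convention that the integrand is $0$ wherever $\hat{\Phi}$ vanishes (the finiteness of the norm forces $\hat{f}=0$ there as well). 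The reproducing property reads $f(u)=\langle k_u, f\rangle_r$ for $k_u = \Phi(\cdot - u)$, which is a one-line Fourier computation from $\hat{k}_u(\omega) = e^{-i\omega u}\hat{\Phi}(\omega)$.

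Next I would establish orthonormality and completeness of $\{\psi_m\}$ simultaneously by showing that $T$ is a unitary map $\mathscr{L}_2(\R) \to \mathscr{H}_r(\R)$. Using Plancherel together with $\abs[0]{\hat{h}}^2 = \hat{\Phi}$, the factor $1/\hat{\Phi}$ in the RKHS inner product cancels and yields $\langle\psi_m,\psi_n\rangle_r = \langle\varphi_m,\varphi_n\rangle_{\mathscr{L}_2(\R)} = \delta_{mn}$; this both proves orthonormality and shows that $T$ is an isometry. For surjectivity, given $f \in \mathscr{H}_r(\R)$, I would define $\hat{\varphi}=\hat{f}/\hat{h}$ on $\{\hat{h}\neq 0\}$ and $\hat{\varphi}=0$ elsewhere, observe that the RKHS norm bound places $\varphi$ in $\mathscr{L}_2(\R)$, and verify $T\varphi = f$ by inspection in the Fourier domain. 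Unitarity of $T$ then carries the orthonormal basis $\{\varphi_m\}$ to the orthonormal basis $\{\psi_m\}$ of $\mathscr{H}_r(\R)$.

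Finally, to obtain~\eqref{eq:main-theorem-r-expansion}, I would expand the kernel section $k_u \in \mathscr{H}_r(\R)$ in the basis $\{\psi_m\}$: conjugate symmetry of the inner product combined with the reproducing property yields $\langle\psi_m, k_u\rangle_r = \psi_m(u)^*$, whence $k_u = \sum_m \psi_m(u)^*\psi_m$ with convergence in $\mathscr{H}_r(\R)$, and pointwise convergence follows because evaluation functionals are bounded on any RKHS. Using that $\Phi$ is real and even to exchange the roles of $t$ and $u$ gives the form stated in the theorem. The main obstacle is the surjectivity of $T$: every other step is a formal Fourier manipulation, while here one must handle possible zeros of $\hat{\Phi}$ carefully---although this is ultimately harmless, because the RKHS norm is blind to any component of $\hat{f}$ supported on $\{\hat{\Phi}=0\}$ and hence forces it to vanish.
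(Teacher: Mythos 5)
Your proposal is correct and follows essentially the same route as the paper: both rest on the Fourier-domain characterization of $\langle \cdot,\cdot\rangle_r$ and the observation that the convolution operator $f \mapsto h * f$ is an isometric isomorphism from $\mathscr{L}_2(\R)$ onto $\mathscr{H}_r(\R)$ because $\abs[0]{\hat{h}}^2 = \hat{\Phi}$ cancels the weight. The only cosmetic differences are that you prove the pointwise expansion directly by expanding the kernel section $k_u$ (the paper cites this as a standard RKHS fact) and that you treat possible zeros of $\hat{\Phi}$ explicitly, whereas the paper invokes strict positivity of $\hat{\Phi}$ for positive-definite $\Phi \in C(\R) \cap \mathscr{L}_1(\R)$.
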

\begin{proof}
\rev{That $r$ is symmetric positive-definite implies that $\hat{\Phi}$ is real-valued and positive~\cite[Theorem~6.11]{Wendland2005}.}
For a function $h$ such that $\abs[0]{\hat{h}(\omega)} = \hat{\Phi}(\omega)^{1/2} > 0$ for all $\omega \in \R$ we define a convolution operator $\mathcal{H} \colon \mathscr{L}_2(\R) \to \mathscr{L}_2(\R)$ via
\begin{equation*}
  (\mathcal{H} f)(t) = \int_{-\infty}^\infty h(t - \tau) f(\tau) \dif \tau \quad \text{ for all } \quad t \in \R.
\end{equation*}
Note that the convolution theorem yields $\widehat{\mathcal{H} f}(\omega) = \hat{h}(\omega) \hat{f}(\omega)$.
By the standard characterisation (see~\cite{KimeldorfWahba1970} or~\cite[Theorem~10.12]{Wendland2005}) of the RKHS of a translation-invariant kernel,
\begin{equation}
  \label{eq:RKHS-fourier}
  \langle f, g \rangle_r = \frac{1}{2\pi} \int_{-\infty}^\infty \frac{\hat{f}^*\!(\omega) \hat{g}(\omega)}{\hat{\Phi}(\omega)} \dif \omega \quad \text{ for any } \quad f, g \in \mathscr{H}_r(\R).
\end{equation}
For any $f, g \in \mathscr{L}_2(\R)$ the convolution theorem and Plancherel theorem thus give
\begin{equation*}
    \langle \mathcal{H}f, \mathcal{H}g \rangle_r = \frac{1}{2\pi} \int_{-\infty}^\infty \frac{\abs[0]{\hat{h}(\omega)}^2 \hat{f}^*\!(\omega) \hat{g}(\omega)}{\hat{\Phi}(\omega)} \dif \omega = \frac{1}{2\pi} \int_{-\infty}^\infty \hat{f}^*\!(\omega) \hat{g}(\omega) \dif \omega = \langle f, g \rangle_{\mathscr{L}_2(\R)},
\end{equation*}
which shows that $\mathcal{H}$ is an isometry from $\mathscr{L}_2(\R)$ to $\mathscr{H}_r(\R)$.
It follows from~\eqref{eq:RKHS-fourier} that the inverse Fourier transform
\begin{equation*}
  (\mathcal{H}^{-1} f)(t) = \frac{1}{2\pi} \int_{-\infty}^\infty \frac{\hat{f}(\omega)}{\hat{h}(\omega)} e^{i\omega t} \dif \omega \quad \text{ for all } \quad t \in \R
\end{equation*}
defines the inverse of $\mathcal{H}$.
Therefore $\mathcal{H}$ is an isometric isomorphism and thus maps every orthonormal basis of $\mathscr{L}_2(\R)$ to an orthonormal basis of $\mathscr{H}_r(\R)$~\cite[Section~2.6]{Higgins1977}.
\rev{The kernel has a pointwise convergent expansion of the form~\eqref{eq:main-theorem-r-expansion} for every orthonormal basis of $\mathscr{H}_r(\R)$~\citep[Theorem~2.4]{Paulsen2016}.}
\end{proof}

To obtain the basis functions $\psi_m$ in time domain using \Cref{thm:main-theorem} one has to either compute the convolution $\int_{-\infty}^\infty h(t - \tau) \rev{\varphi_m(\tau)} \dif \tau$ or the inverse Fourier transform of $\hat{h}(\omega) \hat{\varphi}_m(\omega)$.
It is therefore necessary to select a basis of $\mathscr{L}_2(\R)$ for which either of these operations can be done in closed form.
We use \Cref{thm:main-theorem} to derive orthonormal expansions for (i) Matérn kernels for all half-integer orders, (ii) the Cauchy kernel (i.e., rational quadratic kernel~\cite[Equation~(4.19)]{RasmussenWilliams2006} with $\alpha = 1$), and (iii) the Gaussian kernel.
The expansions are summarised in \Cref{sec:summary}.
\rev{All expansions appearing in this article converge pointwise.}

\subsection{On Mercer expansions}

Let $\Omega$ be a subset of $\R^d$ and $w \colon \Omega \to [0, \infty)$ a weight function.
The Hilbert space $\mathscr{L}_2(\Omega, w)$ is equipped with the inner product
\begin{equation*}
  \langle f, g \rangle_{\mathscr{L}_2(\Omega, w)} = \int_\Omega f^*\!(t) g(t) w(t) \dif t
\end{equation*}
and consists of all functions $f \colon \R \to \C$ for which the corresponding norm is finite.
Suppose that the kernel $r$ is continuous and define the integral operator
\begin{equation}
  \label{eq:mercer-integral-operator}
  \mathcal{T}_{r,w} f = \int_{-\infty}^\infty r(\cdot, u) f(u) w(u) \dif u.
\end{equation}
Under certain assumptions, Mercer's theorem~\cite{SteinwartScovel2012} states that (i) $\mathcal{T}_{r,w}$ has continuous eigenfunctions $\{\rev{\vartheta}_m\}_{m=0}^\infty$ and corresponding positive non-increasing eigenvalues $\{\mu_m\}_{m=0}^\infty$ which tend to zero, (ii) $\{\rev{\vartheta}_m\}_{m=0}^\infty$ are an orthonormal basis of $\mathscr{L}_2(\Omega, w)$, and (iii) $\{\sqrt{\smash[b]{\mu_m}} \rev{\vartheta}_m\}_{m=0}^\infty$ is an orthonormal basis of $\mathscr{H}_r(\Omega)$.
Consequently, the kernel has the pointwise convergent \emph{Mercer expansion}
\begin{equation}
  \label{eq:kernel-expansion-mercer}
  r(t, u) = \sum_{m = 0}^\infty \mu_m \rev{\vartheta}_m^*(t) \rev{\vartheta}_m(u) \quad \text{ for all } \quad t, u \in \Omega.
\end{equation}

While Mercer's theorem and the eigenvalues of $\mathcal{T}_{r,w}$ constitute a powerful tool for understanding topics such as optimal approximation in $\mathscr{L}_2(\Omega, w)$-norm (e.g.,~\cite[Corollary~4.12]{NovakWozniakowski2008} and~\cite[Section~2.4]{FasshauerHickernell2012}) and improved approximation orders in subsets of $\mathscr{H}_r(\Omega)$~\cite[Section~11.5]{Wendland2005}, both in theoretical research and practical applications there is often no reason to prefer a Mercer expansion~\eqref{eq:kernel-expansion-mercer} over a generic RKHS-orthonormal expansion~\eqref{eq:kernel-expansion-intro}.
For example, the Karhunen--Lo\`eve theorem is merely a special case of a more general result that a Gaussian process with covariance kernel~$r$ can be expanded in terms of any orthonormal basis of $\mathscr{H}_r(\Omega)$~\cite[Chapter~III]{Adler1990}.
When an expansion is being sought \rev{solely for} computational reasons, it does not matter whether or not this expansion is Mercer.

Constructing a Mercer expansion by first identifying a convenient weight and then finding the eigendecomposition of the integral operator~\eqref{eq:mercer-integral-operator} can be rather involved, which is illustrated by the construction in~\cite[Example~2.5]{FasshauerMcCourt2015} for the Matérn-$\frac{1}{2}$ kernel.
What makes \Cref{thm:main-theorem} convenient is therefore that it does \emph{not} require that the expansion be Mercer for some weight.
However, identifying a weight $w$ for which the basis function $\psi_m$ constructed via \Cref{thm:main-theorem} are $\mathscr{L}_2(\R, w)$-orthogonal shows that the expansion is Mercer because the $\mathscr{L}_2(\R, w)$-normalised versions of $\psi_m$ are the eigenfunctions of $\mathcal{T}_{r,w}$.
It turns out that our expansion for the Gaussian kernel is Mercer and the ones for Matérn kernels are ``almost'' Mercer, in that all but finitely many basis functions are orthogonal in $\mathscr{L}_2(\R, w)$ for a certain weight.

\section{Summary of expansions}
\label{sec:summary}

This section summarises the expansions that we derive using \Cref{thm:main-theorem}.
Each expansion converges pointwise for all $t, u \in \R$.
All expansions are for kernels with unit scaling.
Expansions of arbitrary scalings, $\lambda$, may be obtained by considering the kernel $r(\lambda t, \lambda u)$, for which the corresponding basis functions are $\psi_m(\lambda t)$.

\subsection{Matérn kernels}
Expansions for Matérn kernels are derived in \Cref{sec:matern}.
A Matérn kernel of order $\alpha > 0$ is
\begin{equation} \label{eq:matern-kernel-intro}
r_{\alpha}(t, u) =   \frac{2^{1-\alpha}}{\Gamma(\alpha)} ( \abs[0]{t-u})^\alpha \mathrm{K}_\alpha( \abs[0]{t-u}  ),
\end{equation}
where $\Gamma$ is the Gamma function and $\mathrm{K}_\alpha$ the modified Bessel function of the second kind of order~$\alpha$.
Let $\mathrm{L}_m^{(\eta)}$ denote the $m$th associated Laguerre polynomial of index $\eta$, defined in~\eqref{eq:associated-laguerre}, and let $\{ \varphi_{m} \}_{m \in \Z}$ be the Laguerre functions
\begin{equation*}
\varphi_{m}(t) = \sqrt{2} \, \mathrm{L}_m(2 t) e^{-t} \mathbf{1}_{[0,\infty)}(t) \quad \text{ and } \quad  \varphi_{-m-1}(t) = -\sqrt{2} \, \mathrm{L}_m(-2 t) e^{t} \mathbf{1}_{(-\infty,0)}(t)
\end{equation*}
for $m \in \N_0$, where $\mathrm{L}_m = \mathrm{L}_m^{(0)}$ and $\mathbf{1}_A$ denotes the indicator function of a set $A$.
Consider half-integer order $\alpha = \nu + 1/2$ for $\nu \in \N_0$.
Then the Mat\'ern-Laguerre functions
\begin{align*}
\psi^{+}_{m, \nu}(t) &= \frac{\nu!}{(2\nu)!} \frac{m!}{(m+\nu+1)!} (2 t)^{\nu+1} \mathrm{L}_m^{(\nu+1)}(2 t) e^{-t} \mathbf{1}_{[0,\infty)}(t) &&\text{ for } \quad m \in \mathbb{N}_0\\
\psi^{-}_{m, \nu}(t) &= (-1)^\nu \psi^{+}_{m, \nu}(-t)  &&\text{ for } \quad m \in \mathbb{N}_0, \\
\psi^{0}_{m, \nu}(t) &= \frac{1}{ \sqrt{\smash[b]{2}} } \frac{\nu!}{\sqrt{\smash[b]{(2\nu)!}}} \sum_{k=0}^{\nu+1} {\nu+1\choose k}(-1)^k \varphi_{m+k-\nu-1}(t) &&\text{ for } m = 0, \ldots, \nu
\end{align*}
form an orthonormal basis of the RKHS and
\begin{equation*}
r_{\nu + 1/2,}(t, u) = \sum_{m=0}^\nu \psi^{0}_{m,\nu}(t) \psi^{0}_{m,\nu}(u) +  \sum_{m=0}^\infty \psi^{-}_{m,\nu}(t)\psi^{-}_{m,\nu}(u) + \sum_{m=0}^\infty \psi^{+}_{m,\nu}(t)\psi^{+}_{m,\nu}(u)
\end{equation*}
for all $t, u \in \R$.
The basis functions $\smash[b]{\psi_{m,\nu}^{-}}$ and $\smash[b]{\psi_{m, \nu}^{+}}$ are orthogonal in $\mathscr{L}_2(\R, w_{\nu})$ for the weight function $\smash[b]{w_{\nu}(t) = 2 / \abs{2 t}^{\nu+1}}$.

\subsection{Cauchy kernel}

Expansions for the Cauchy kernel are derived in \Cref{sec:cauchy}.
The Cauchy kernel is
\begin{equation*}
  r(t, u) = \frac{1}{1 + (t - u)^2}.
\end{equation*}
Both the complex-valued Cauchy--Laguerre functions
\begin{equation*}
\psi_{m}(t) = -\frac{1}{ \sqrt{2} } \frac{ (i t)^m }{(i t -1)^{m+1}}  \quad \text{ and } \quad \psi_{-m-1}(t) = -\frac{1}{ \sqrt{2} } \frac{ (i t)^m }{(i t + 1)^{m+1}}
\end{equation*}
for $m \in \N_0$ and the \rev{real-valued Cauchy--Laguerre functions}
\begin{equation*}
\alpha_{m}(t) = \frac{1}{\sqrt{2}} \big( \psi_{m}(t) + \psi_{m}^*(t) \big) \quad \text{and} \quad \beta_{m}(t) = \frac{1}{\sqrt{2}} \big( \psi_{m}(t) - \psi_{m}^*(t) \big)
\end{equation*}
for $m \in \N_0$ form orthonormal bases of the RKHS.
Therefore, the Cauchy kernel has the expansions
\begin{equation*}
r(t, u) = \sum_{m=-\infty}^\infty \psi_{m}^*(t)\psi_{m}(u) =  \sum_{m=0}^\infty \alpha_{m}(t)\alpha_{m}(u) + \sum_{m=0}^\infty \beta_{m}(t)\beta_{m}(u)
\end{equation*}
for all $t, u \in \R$.
Expressions of $\alpha_{m}$ and $\beta_{m}$ in terms of real parameters are given in \eqref{eq:real_cauchy_laguerre_explicit}.

\subsection{Gaussian kernel}

Expansions for the Gaussian kernel are derived in \Cref{sec:gaussian}.
The Gaussian kernel is
\begin{equation*}
  r(t, u) = \exp\bigg( \! -\frac{1}{2} (t - u)^2 \bigg).
\end{equation*}
The functions
\begin{equation}
  \label{eq:gaussian-basis-intro}
  \psi_{m}(t) = \bigg( \frac{2 \sqrt{2}}{3} \bigg)^{1/2} \sqrt{ \frac{1}{6^m m!} }  e^{-t^2/3} \mathrm{H}_m\bigg( \frac{2 t}{\sqrt{3}} \bigg) \quad \text{ for } \quad m \in \N_0
\end{equation}
form an orthonormal basis of the RKHS and the kernel has the expansion
\begin{equation*}
  r(t, u) = \sum_{m=0}^\infty \psi_{m}(t) \psi_{m}(u)
\end{equation*}
for all $t, u \in \R$.
This expansion is a special case of the well-known Mercer expansion of the Gaussian kernel~\cite[Section~12.2.1]{FasshauerMcCourt2015}.
The basis functions~\eqref{eq:gaussian-basis-intro} are orthogonal in $\mathscr{L}_2(\R, w_\alpha)$ for the weight function $w_\alpha(t) = \alpha \pi^{-1/2} e^{-\alpha^2 t^2}$ with $\alpha = \sqrt{\smash[b]{2/3}}$.


\section{Expansions of Matérn kernels}
\label{sec:matern}
The Mat\'ern kernel of order $\alpha > 0$ in~\eqref{eq:matern-kernel-intro} can be written as
\begin{equation*}
r_{\alpha}(t, u) =   \frac{2^{1-2\alpha}}{\Gamma(\alpha)} (2 \abs{t-u})^\alpha \mathrm{K}_\alpha( \abs[0]{t-u}  ),
\end{equation*}
and its Fourier transform is~\rev{\citep[e.g.,][Theorem~6.13]{Wendland2005}}
\begin{equation*}
\hat{\Phi}_{\alpha}(\omega) = 2^{1-2\alpha} \sqrt{\pi} \, \frac{\Gamma(\alpha+1/2)}{\Gamma(\alpha)} \frac{2^{2\alpha}}{ (\omega^2 + 1)^{\alpha + 1/2 } }.
\end{equation*}
From now on we assume that the kernel is of half-integer order: $\alpha = \nu + 1/2$ for $\nu \in \N_0$.
Then the Fourier transform simplifies to
\begin{equation*}
\hat{\Phi}_{\nu+1/2}(\omega) = \frac{  (\nu!)^2 }{(2\nu)!} \frac{2^{2\nu+1}}{(\omega^2+1)^{\nu+1}},
\end{equation*}
and a non-symmetric square-root, in the sense that $\abs[0]{\hat{h}_{\nu+1/2}(\omega)}^2 = \hat{\Phi}_{\nu+1/2}(\omega)$, is given by
\begin{equation}
  \label{eq:h-hat-matern}
\hat{h}_{\nu+1/2}(\omega) = \frac{ \nu! }{\sqrt{\smash[b]{(2\nu)!}}} \frac{2^{\nu+1/2}}{(i\omega + 1)^{\nu+1}}.
\end{equation}
The corresponding time domain function is~\cite[Section~1.03]{Wiener1949}
\begin{equation}
\label{eq:h-matern}
  h_{\nu+1/2}(t) = 2^{\nu+1/2} \frac{ \nu! }{\sqrt{\smash[b]{(2\nu)!}}} \frac{t^\nu}{\nu !} e^{-t} \mathbf{1}_{[0,\infty)}(t).
\end{equation}
\rev{Note that} this function vanishes on the negative real line.

\subsection{Laguerre functions}
\label{sec:laguerre-functions}
The following material is mostly based on Section~2.6.4 in~\cite{Higgins1977} and Section~1.03 in~\cite{Wiener1949}.
To derive an orthonormal expansion for the Matérn kernel we use the so-called \emph{Laguerre functions} $\varphi_{m}$ whose Fourier transforms are given by
\begin{equation}
  \label{eq:laguerre-func-ft}
\hat{\varphi}_{m}(\omega) = \sqrt{2} \frac{(i\omega-1)^m}{(i\omega+1)^{m+1}} \quad \text{ for } \quad m \in \mathbb{Z}.
\end{equation}
The functions $\hat{\varphi}_{m}$ form an orthonormal basis of $\mathscr{L}_2(\mathbb{R}, 1/2\pi)$.
Because the Fourier transform is an isometry, the Laguerre functions themselves, defined by the inverse Fourier transform
\begin{equation} \label{eq:laguerre-inverse-ft}
\varphi_{m}(t) = \frac{1}{2\pi} \int_{-\infty}^\infty \hat{\varphi}_{m}(\omega) e^{i\omega t} \dif \omega \quad \text{ for } \quad m \in \mathbb{Z},
\end{equation}
are an orthonormal basis of $\mathscr{L}_2(\R)$.
Let $\mathrm{L}_m$ for $m \in \N_0$ be the $m$th Laguerre polynomial
\begin{equation}
  \label{eq:laguerre-polynomial}
  \mathrm{L}_m(t) = \sum_{k=0}^m \binom{m}{k} \frac{(-1)^k}{k!} t^k.
\end{equation}
For non-negative indices $m \in \N_0$ the inverse Fourier transform~\eqref{eq:laguerre-inverse-ft} is given by
\begin{equation}
  \label{eq:phi-m-matern-non-negative}
\varphi_{m}(t) = \sqrt{2} \, \mathrm{L}_m(2 t) e^{-t} \mathbf{1}_{[0,\infty)}(t).
\end{equation}
The conjugate symmetry $\hat{\varphi}_{-m-1}^*(\omega) = -\hat{\varphi}_{m}(\omega)$ gives the following expression for negative indices:
\begin{equation*}
  \varphi_{-m-1}(t) = - \varphi_{m}(-t) = -\sqrt{2} \, \mathrm{L}_m(-2 t) e^{t} \mathbf{1}_{(-\infty, 0)}(t) \quad \text{ for } \quad  m \in \mathbb{N}_0.
\end{equation*}
The Laguerre functions and their Fourier transforms satisfy the following useful identities:
\begin{align*}
\hat{\varphi}_{-m-1}^*(\omega) &= -\hat{\varphi}_{m}(\omega), \tag{conjugate symmetry} \\
\hat{\varphi}_{m+k}(\omega) &= \Big( \frac{i\omega-1}{i\omega + 1}  \Big)^k \hat{\varphi}_{m}(\omega), \tag{shift property} \\
\hat{\varphi}_{m}(\omega)\hat{\varphi}_{k}(\omega) &= \frac{1}{ \sqrt{2} } \big(  \hat{\varphi}_{m+k}(\omega) - \hat{\varphi}_{m+k+1}(\omega)  \big), \tag{multiplication property} \\
\frac{ 2^{\nu+1/2} }{(i\omega + 1)^{\nu+1} }  &= \sum_{k=0}^\nu {\nu \choose k} (-1)^k \hat{\varphi}_{k}(\omega). \tag{binomial identity} \\
\end{align*}

\subsection{Mat\'ern--Laguerre functions}
\label{sec:matern-laguerre}
In view of \Cref{thm:main-theorem}, an orthonormal basis for the RKHS of the Matérn kernel $r_{\nu, + 1/2}$ is obtained from~\eqref{eq:h-hat-matern} and~\eqref{eq:laguerre-func-ft} in Fourier domain as
\begin{equation}\label{eq:matern_laguerre_functions}
\hat{\psi}_{m,\nu}(\omega) = \hat{h}_{\nu+1/2}(\omega)\hat{\varphi}_{m}(\omega)
= 2^{\nu+1}\frac{ \nu! }{\sqrt{\smash[b]{(2\nu)!}}}  \frac{(i\omega-1)^m}{(i\omega+1)^{m+1+\nu+1}}
\end{equation}
for $m \in \Z$.
We call the resulting functions the \emph{Mat\'ern--Laguerre functions}.
Like the Laguerre functions, the Mat\'ern--Laguerre functions satisfy a certain conjugate symmetry property in the sense that
\begin{equation}\label{eq:matern_laguerre_conjugate_symmetry}
\hat{\psi}_{-\nu-1-m-1,\nu}(\omega) = (-1)^{\nu} \hat{\psi}^*_{m,\nu}(\omega) \quad \text{ for } \quad m \in \mathbb{N}_0.
\end{equation}
Furthermore, by the binomial identity and the shift property of Laguerre functions, the Matérn--Laguerre functions and their Fourier transforms are
\begin{equation}
  \label{eq:matern-laguerre-functions}
  \psi_{m,\nu}(t) = \frac{1}{ \sqrt{2} } \frac{ \nu! }{\sqrt{\smash[b]{(2\nu)!}}} \sum_{k=0}^{\nu+1}  {\nu+1 \choose k}(-1)^k \varphi_{m+k}(t)
\end{equation}
and
\begin{equation} \label{eq:matern_laguerre_binomial_expression}
\hat{\psi}_{m,\nu}(\omega) = \frac{1}{ \sqrt{2} } \frac{ \nu! }{\sqrt{\smash[b]{(2\nu)!}}}   \sum_{k=0}^{\nu+1}  {\nu+1 \choose k}(-1)^k \hat{\varphi}_{m+k}(\omega)
\end{equation}
for $m \in \Z$.
The Matérn kernel of order $\nu + 1/2$ can therefore be expanded as
\begin{equation} \label{eq:matern-expansion}
  r_{\nu + 1/2}(t, u) = \sum_{m=-\infty}^\infty \psi_{m,\nu}(t) \psi_{m,\nu}(u).
\end{equation}
The following proposition provides a uniform upper bound on the Matérn--Laguerre functions.
\begin{proposition}[Matérn--Laguerre upper bound]
  \label{prop:matern_laguerre_bound}
For all $t \in \R$ and $m \in \Z$, the Mat\'ern--Laguerre functions satisfy
\begin{equation*}
\abs[0]{\psi_{m,\nu}(t)} \leq \frac{ 2^\nu \nu! }{\sqrt{\smash[b]{(2\nu)!}}} \sim (\pi \nu)^{1/4} \quad \text{ as } \quad \nu \to \infty.
\end{equation*}
\end{proposition}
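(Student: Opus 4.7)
The plan is to combine the binomial expansion \eqref{eq:matern-laguerre-functions} of the Matérn--Laguerre functions with a uniform pointwise bound on the Laguerre functions $\varphi_m$, and then apply Stirling's formula for the asymptotic.

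The first ingredient is the pointwise bound $\abs[0]{\varphi_m(t)} \leq \sqrt{2}$ for every $m \in \Z$ and $t \in \R$. For $m \in \N_0$ this follows from the classical Laguerre polynomial inequality $\abs[0]{\mathrm{L}_n(x)} \leq e^{x/2}$ valid on $[0, \infty)$ (see, e.g., Szeg\H{o}, \emph{Orthogonal Polynomials}, Theorem~7.6.3), combined with the explicit form \eqref{eq:phi-m-matern-non-negative}. The case $m < 0$ then reduces to the previous one via the reflection identity $\varphi_{-m-1}(t) = -\varphi_m(-t)$ noted in Section~\ref{sec:laguerre-functions}.

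The second ingredient is the triangle inequality applied to \eqref{eq:matern-laguerre-functions}, together with the binomial identity $\sum_{k=0}^{\nu+1} \binom{\nu+1}{k} = 2^{\nu+1}$. A naive application gives a constant of $2^{\nu+1}$; to recover the stated $2^\nu$, one observes that at any fixed $t \in \R$ only those terms $\varphi_{m+k}(t)$ whose index $m+k$ has the same sign as $t$ are nonzero (by the support properties of the Laguerre functions), which restricts the effective binomial sum and halves the constant.

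Finally, the asymptotic is obtained from Stirling's formula via the classical estimate $\binom{2\nu}{\nu} \sim 4^\nu / \sqrt{\pi \nu}$, which rearranges to $\frac{4^\nu (\nu!)^2}{(2\nu)!} \sim \sqrt{\pi \nu}$ and hence $\frac{2^\nu \nu!}{\sqrt{\smash[b]{(2\nu)!}}} \sim (\pi \nu)^{1/4}$ as $\nu \to \infty$. The only non-routine step is the pointwise Laguerre estimate in the first ingredient; the binomial bookkeeping and Stirling expansion are mechanical.
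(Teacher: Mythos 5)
Your first ingredient and your Stirling computation are fine: $\abs[0]{\mathrm{L}_m(x)} \leq e^{x/2}$ on $[0,\infty)$ does give $\abs[0]{\varphi_m(t)} \leq \sqrt{2}$ for all $m \in \Z$ via \eqref{eq:phi-m-matern-non-negative} and the reflection identity, and $\binom{2\nu}{\nu} \sim 4^\nu/\sqrt{\pi\nu}$ gives the stated asymptotic. The gap is in the step where you claim to recover the constant $2^\nu$ from the naive $2^{\nu+1}$. The support argument does not halve the binomial sum: for $m \geq 0$ and $t > 0$ \emph{every} index $m+k$ with $k = 0,\ldots,\nu+1$ is non-negative, so all $\nu+2$ terms of \eqref{eq:matern-laguerre-functions} are active and the effective sum is the full $\sum_{k=0}^{\nu+1}\binom{\nu+1}{k} = 2^{\nu+1}$ (symmetrically for $m \leq -\nu-2$ and $t<0$). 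Only for the finitely many null-space indices $-\nu-1 \leq m \leq -1$ does the range $\{m,\ldots,m+\nu+1\}$ straddle zero, and even then the surviving binomial mass is generally larger than $2^\nu$ (e.g.\ $m=-1$, $t>0$ leaves $2^{\nu+1}-1$). So as written your argument only yields $\abs[0]{\psi_{m,\nu}(t)} \leq 2^{\nu+1}\nu!/\sqrt{\smash[b]{(2\nu)!}}$, twice the claimed bound.

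The paper avoids this loss by working one step earlier in the expansion: it uses the binomial identity to write $\hat{h}_{\nu+1/2}(\omega) = \frac{\nu!}{\sqrt{\smash[b]{(2\nu)!}}}\sum_{k=0}^{\nu}\binom{\nu}{k}(-1)^k\hat{\varphi}_k(\omega)$, whose binomial mass is $2^\nu$, and then bounds each resulting term $\frac{1}{2\pi}\int_{-\infty}^\infty \hat{\varphi}_m(\omega)\hat{\varphi}_k(\omega)e^{i\omega t}\dif\omega$ by $1$ using Cauchy--Schwarz and the unit $\mathscr{L}_2(\R,1/2\pi)$-norms of $\hat{\varphi}_m$ and $\hat{\varphi}_k$. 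Your route passes through the multiplication property, which converts each product $\hat{\varphi}_m\hat{\varphi}_k$ into a difference of two Laguerre functions and is exactly where the extra factor of $2$ enters; to repair it within your framework you would need the sharper pointwise estimate $\abs[0]{\varphi_n(t) - \varphi_{n+1}(t)} \leq \sqrt{2}$ (rather than the $2\sqrt{2}$ that the triangle inequality gives), which again is most easily obtained by the paper's Fourier-domain Cauchy--Schwarz argument.
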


\begin{proof}
By~\eqref{eq:matern_laguerre_binomial_expression} and the binomial identity for Laguerre functions,
\begin{equation*}
\psi_{m,\nu}(t) =  \frac{ \nu! }{\sqrt{\smash[b]{(2\nu)!}}}  \sum_{k=0}^\nu {\nu \choose k} (-1)^k  \frac{1}{2\pi}  \int_{-\infty}^\infty \hat{\varphi}_{m}(\omega) \hat{\varphi}_{k}(\omega) e^{i\omega t} \dif \omega.
\end{equation*}
Apply the triangle inequality, the Cauchy--Schwartz inequality, and the orthonormality in $\mathscr{L}_2(\R, 1/2\pi)$ of $\hat{\varphi}_{m}$ to arrive at
\begin{equation*}
\begin{split}
\abs[0]{\psi_{m,\nu}(t)} &\leq \frac{ \nu! }{\sqrt{(2\nu)!}}  \sum_{k=0}^\nu {\nu \choose k}  \frac{1}{2\pi}  \abs{\int_{-\infty}^\infty e^{i\omega t}\hat{\varphi}_{m}(\omega) \hat{\varphi}_{k}(\omega) \dif \omega} \\
&\leq \frac{ \nu! }{\sqrt{\smash[b]{(2\nu)!}}}  \sum_{k=0}^\nu {\nu \choose k} \frac{1}{2\pi} \bigg( \int_{-\infty}^\infty \abs[0]{\hat{\varphi}_{m}(\omega)}^2 \dif \omega \bigg)^{1/2}\bigg( \int_{-\infty}^\infty \abs[0]{\hat{\varphi}_{k}(\omega)}^2 \dif \omega  \bigg)^{1/2} \\
&= \frac{ \nu! }{\sqrt{\smash[b]{(2\nu)!}}}  \sum_{k=0}^\nu {\nu \choose k} \\
&= \frac{ 2^\nu \nu! }{\sqrt{\smash[b]{(2\nu)!}}}.
\end{split}
\end{equation*}
The asymptotic equivalence as $\nu \to \infty$ follows from Stirling's formula.
\end{proof}

It appears difficult to improve upon the bound in \Cref{prop:matern_laguerre_bound}.
Consequently, uniform convergence of Mat\'ern--Laguerre expansions on $\R$ is likely unattainable.

\subsection{Classification of Matérn--Laguerre functions}
\label{sec:matern-classification}

For $m \in \N_0$, a more compact and convenient expression of the Mat\'ern--Laguerre functions~\eqref{eq:matern-laguerre-functions} may be obtained by using the convolution formula in \Cref{thm:main-theorem}.
For $\eta \in \N_0$, the associated Laguerre polynomial $\mathrm{L}_m^{(\eta)}$ is defined as
\begin{equation} \label{eq:associated-laguerre}
  \mathrm{L}_m^{(\eta)}(t) = \sum_{k=0}^m \binom{m + \eta}{m - k} \frac{(-1)^k}{k!} t^k.
\end{equation}
The associated Laguerre polynomial $\mathrm{L}_m^{(0)}$ equals the Laguerre polynomial $\mathrm{L}_m$ in~\eqref{eq:laguerre-polynomial}.
For $t > 0$ and $m \in \mathbb{N}_0$, we get from~\eqref{eq:h-matern} and~\eqref{eq:phi-m-matern-non-negative} that
\begin{equation*}
\begin{split}
  \psi_{m, \nu}(t) &= \int_{-\infty}^\infty h_{\nu+1/2}(t - \tau) \varphi_{m}(\tau) \dif \tau \\
  &= \frac{\nu!}{\sqrt{\smash[b]{(2\nu)!}}} 2^{\nu+1/2} \int_0^t e^{-(t-\tau)}\frac{(t - \tau)^\nu}{\nu!} e^{-\tau} \sqrt{2} \mathrm{L}_m( 2\tau) \dif \tau \\
  &= \frac{\nu!}{\sqrt{\smash[b]{(2\nu)!}}} 2 e^{-t} \int_0^t \frac{(2t - 2\tau)^\nu}{\nu!}\mathrm{L}_m( 2\tau) \dif \tau \\
  &= \frac{\nu!}{\sqrt{\smash[b]{(2\nu)!}}}  e^{-t} \int_0^{2t} \frac{(2t - \tau)^\nu}{\nu!}\mathrm{L}_m( \tau) \dif \tau \\
&= \frac{\nu!}{\sqrt{\smash[b]{(2\nu)!}}} \frac{m!}{(m+\nu+1)!}  (2t)^{\nu+1} \mathrm{L}_m^{(\nu+1)}(2t) e^{-t},
\end{split}
\end{equation*}
where the last equality follows from a convolution identity for Laguerre polynomials~\cite[Chapter~6, Problem~(3)]{Bell2004}.
For $t < 0$, the Laguerre functions $\varphi_{m}(t)$ vanish and the convolution evaluates to zero and hence
\begin{equation*}
\psi_{m, \nu}(t) = \frac{\nu!}{\sqrt{\smash[b]{(2\nu)!}}} \frac{m!}{(m+\nu+1)!}  (2 t)^{\nu+1} \mathrm{L}_m^{(\nu+1)}( 2 t) e^{- t} \mathbf{1}_{[0,\infty)}(t) \quad \text{ for } \quad m \in \mathbb{N}_0.
\end{equation*}
For negative indices $m \leq -\nu - 2$ a similar expression is obtained from the conjugate symmetry~\eqref{eq:matern_laguerre_conjugate_symmetry}:
\begin{equation}\label{eq:negative_matern_laguerre}
  \begin{split}
    \psi_{-\nu-1-m-1,\nu}(t) &= (-1)^{\nu} \psi_{m,\nu}(-t) \\
    &= -\frac{\nu!}{\sqrt{\smash[b]{(2\nu)!}}} \frac{m!}{(m+\nu+1)!}  (2 t)^{\nu+1} \mathrm{L}_m^{(\nu+1)}(2\abs[0]{t}) e^{-\abs[0]{t}} \mathbf{1}_{(-\infty,0)}(t)
  \end{split}
\end{equation}
for $m \in \mathbb{N}_0$.
This motivates the following notation for the three classes of Mat\'ern--Laguerre functions that comprise an orthonormal basis:
\begin{align}
  \psi_{m, \nu}^{+}(t) &= \psi_{m, \nu}(t) &&\text{ for } \quad m \in \N_0, \label{eq:psi-positive} \\
  \psi_{m, \nu}^{-}(t) &= (-1)^{\nu} \psi_{m,\nu}(-t) &&\text{ for } \quad m \in \N_0, \label{eq:psi-negative} \\
  \psi_{m, \nu}^{0}(t) &= \psi_{-\nu-1+m,\nu}(t) &&\text{ for } \quad  m = 0,1, \ldots ,\nu. \label{eq:psi-null}
\end{align}
For convenience, define the corresponding sets
\begin{equation*}
  \mathscr{M}_{\nu}^+ = \big\{ \psi_{m,\nu}^{+} \big\}_{m \in \N_0}, \quad \mathscr{M}_{\nu}^- = \big\{ \psi_{m,\nu}^{-} \big\}_{m \in \N_0}, \quad \mathscr{M}_{\nu}^0 = \big\{ \psi_{m,\nu}^{0} \big\}_{m=0}^\nu,
\end{equation*}
the union $\mathscr{M}_{\nu} = \mathscr{M}_{\nu}^- \cup \mathscr{M}_{\nu}^+$, and the kernels
\begin{equation} \label{eq:rho-kernels}
  \rho_{\nu + 1/2}^-(t, u) = \sum_{m=0}^\infty \psi_{m, \nu}^{-}(t) \psi_{m, \nu}^{-}(u) \quad \text{and} \quad \rho_{\nu + 1/2}^+(t, u) = \sum_{m=0}^\infty \psi_{m, \nu}^{+}(t) \psi_{m, \nu}^{+}(u).
\end{equation}
We call the set $\mathscr{M}_{\nu}^0$ the \emph{null-space} and study it in more detail in \Cref{sec:null-space}.
For now, note that the null-space functions are supported on $\R$ because from~\eqref{eq:matern-laguerre-functions} one can see that for $m = 0, \ldots, \nu$ the sum that defines $\psi_{-\nu-1+m,\nu}$ contains Laguerre functions with both negative and non-negative indices.
Some of the basis functions are shown in \Cref{fig:matern-null-space,fig:matern-non-null-space}.

\begin{figure}[t]
  \centering
  \includegraphics[width=\textwidth]{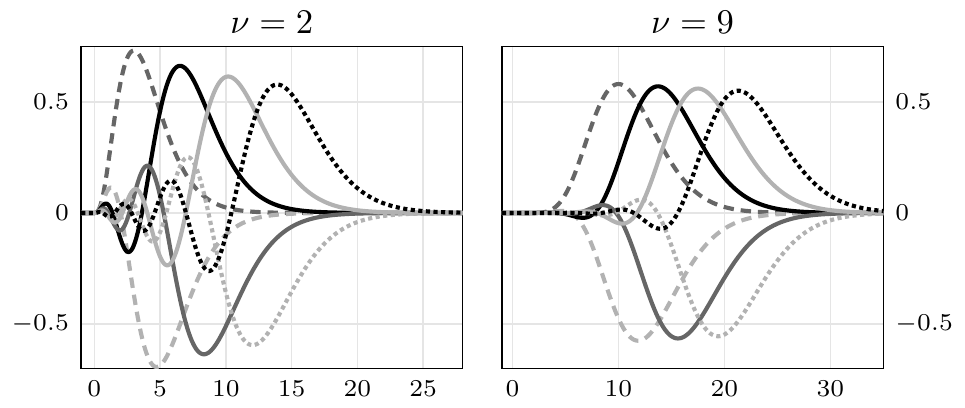}
  \caption{The Matérn--Laguerre functions $\psi_{m, \nu}^{+}$ in~\eqref{eq:psi-positive} for $m = 0, \ldots, 6$. Observe that the functions vanish on the negative real line.}
  \label{fig:matern-non-null-space}
\end{figure}

The Matérn expansion~\eqref{eq:matern-expansion} can now be written in terms of these functions and kernels as
\begin{equation*}
  r_{\nu + 1/2}(t, u) = \sum_{m=0}^\nu \psi_{m, \nu}^{0}(t) \psi_{m, \nu}^{0}(u) + \rho_{\nu + 1/2}^-(t, u) + \rho_{\nu + 1/2}^+(t, u).
\end{equation*}
It is clear that the functions in $\mathscr{M}_{\nu}^-$ are supported on the negative real line and the functions in $\mathscr{M}_{\nu}^+$ on the positive real line.
This observation yields the following simplifications:
\begin{align}
  r_{\nu + 1/2}(t, u) &= \sum_{m=0}^\nu \psi_{m, \nu}^{0}(t) \psi_{m, \nu}^{0}(u) + \rho_{\nu + 1/2}^+(t, u) &&\text{ if } \quad t \geq 0 \: \text{ or } \: u \geq 0, \label{eq:matern-simplification-positive} \\
  r_{\nu + 1/2}(t, u) &= \sum_{m=0}^\nu \psi_{m, \nu}^{0}(t) \psi_{m, \nu}^{0}(u) + \rho_{\nu + 1/2}^-(t, u) &&\text{ if } \quad t \leq 0 \: \text{ or } \: u \leq 0, \\
    r_{\nu + 1/2}(t, u) &= \sum_{m=0}^\nu \psi_{m, \nu}^{0}(t) \psi_{m, \nu}^{0}(u) &&\text{ if } \quad \operatorname{sign} t \neq \operatorname{sign} u. \label{eq:matern-simplification-sign}
\end{align}
We next show that $\mathscr{M}_{\nu}$, $\mathscr{M}_{\nu}^-$, and $\mathscr{M}_{\nu}^+$ form orthogonal bases with respect to the weight function
\begin{equation*}
w_{\nu}(t) =  2 / \abs{2 t}^{\nu+1}.
\end{equation*}
This justifies saying that the expansions we have derived for Matérn kernels are ``almost'' Mercer.

\begin{proposition}[Matérn--Laguerre orthogonality]
  \label{prop:matern_laguerre_L2}
The sets $\mathscr{M}_{\nu}$, $\mathscr{M}_{\nu}^+$, and $\mathscr{M}_{\nu}^-$ form orthogonal bases in $\mathscr{L}_2(\mathbb{R},w_{\nu} )$, $\mathscr{L}_2(\mathbb{R}_+,w_{\nu})$, and $\mathscr{L}_2(\mathbb{R}_-,w_{\nu} )$, respectively.
Furthermore,
\begin{equation*}
\norm[1]{ \psi^{+}_{m,\nu} }_{ \mathscr{L}_2(\mathbb{R},w_{\nu}) }^2 = \norm[1]{ \psi^{-}_{m,\nu} }_{ \mathscr{L}_2(\mathbb{R},w_{\nu}) }^2 = \frac{(\nu!)^2}{(2\nu)!} \frac{m!}{(m+\nu+1)!} \quad \text{ for every } \quad m \in \N_0.
\end{equation*}

\end{proposition}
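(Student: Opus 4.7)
The strategy is to reduce everything to the classical orthogonality of the associated Laguerre polynomials,
\begin{equation*}
  \int_0^\infty \mathrm{L}_m^{(\eta)}(u) \mathrm{L}_n^{(\eta)}(u) u^\eta e^{-u} \dif u = \frac{(m+\eta)!}{m!} \delta_{mn},
\end{equation*}
taken with $\eta = \nu+1$. First, since $\psi_{m,\nu}^{+}$ is supported on $[0,\infty)$ and $\psi_{m,\nu}^{-}$ on $(-\infty,0]$, the two families are automatically orthogonal in $\mathscr{L}_2(\R, w_{\nu})$, so it suffices to handle $\mathscr{M}_\nu^+$ on $\mathbb{R}_+$; the corresponding claim for $\mathscr{M}_\nu^-$ follows immediately from the reflection identity $\psi_{m,\nu}^{-}(t) = (-1)^{\nu}\psi_{m,\nu}^{+}(-t)$ in~\eqref{eq:psi-negative} together with the fact that $w_\nu$ is even.

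For $\mathscr{M}_\nu^+$ I would plug the explicit form $\psi_{m,\nu}^{+}(t) = \frac{\nu!}{\sqrt{(2\nu)!}} \frac{m!}{(m+\nu+1)!} (2t)^{\nu+1} \mathrm{L}_m^{(\nu+1)}(2t) e^{-t}$ into $\int_0^\infty \psi_{m,\nu}^{+}(t) \psi_{n,\nu}^{+}(t) w_\nu(t) \dif t$, substitute $u = 2t$, and use $w_\nu(t) = 2/(2t)^{\nu+1}$ to cancel exactly one factor of $u^{\nu+1}$. What remains is $\int_0^\infty \mathrm{L}_m^{(\nu+1)}(u) \mathrm{L}_n^{(\nu+1)}(u) u^{\nu+1} e^{-u} \dif u$, and the classical formula above delivers both the orthogonality and the stated norm
\begin{equation*}
  \norm[1]{\psi_{m,\nu}^{+}}_{\mathscr{L}_2(\R, w_\nu)}^2 = \frac{(\nu!)^2}{(2\nu)!} \frac{m!}{(m+\nu+1)!}
\end{equation*}
after multiplying by the squared prefactor $(\nu!)^2 (m!)^2 / ((2\nu)!((m+\nu+1)!)^2)$.

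To finish the proof that $\mathscr{M}_\nu^+$ is a \emph{basis} (not merely an orthogonal system) of $\mathscr{L}_2(\R_+, w_\nu)$, I would introduce the isometric isomorphism $U \colon \mathscr{L}_2(\R_+, w_\nu) \to \mathscr{L}_2(\R_+, \dif u)$ defined by $(Uf)(u) = f(u/2) u^{-(\nu+1)/2}$; the change of variables $u = 2t$ shows it preserves norms. A direct calculation then gives $(U\psi_{m,\nu}^{+})(u) = c_m \, u^{(\nu+1)/2} e^{-u/2} \mathrm{L}_m^{(\nu+1)}(u)$ for explicit constants $c_m$, and the functions $u^{(\nu+1)/2} e^{-u/2} \mathrm{L}_m^{(\nu+1)}(u)$ are the standard associated Laguerre functions, which are known to form a complete orthogonal system in $\mathscr{L}_2(\R_+, \dif u)$. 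Pulling this back through $U^{-1}$ yields completeness of $\mathscr{M}_\nu^+$ in $\mathscr{L}_2(\R_+, w_\nu)$, and combining the two one-sided statements gives the full claim for $\mathscr{M}_\nu$.

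The only slightly delicate step is the completeness argument, since one has to verify that the similarity $U$ is bijective and that the image of $\psi_{m,\nu}^{+}$ really is a constant multiple of the classical associated Laguerre function; once that is set up, everything else is either bookkeeping of constants or a direct appeal to standard Laguerre identities.
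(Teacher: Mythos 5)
Your proposal is correct and follows essentially the same route as the paper: both reduce the claim to the orthogonality and completeness of the associated Laguerre functions $t^{(\nu+1)/2}\mathrm{L}_m^{(\nu+1)}(t)e^{-t/2}$ in $\mathscr{L}_2(\R_+)$ (the paper cites Szeg\H{o}, Theorem~5.7.1), compute the norms from the standard Laguerre normalisation $\int_0^\infty [\mathrm{L}_m^{(\nu+1)}(t)]^2 t^{\nu+1}e^{-t}\dif t = (m+\nu+1)!/m!$, and handle $\mathscr{M}_\nu^-$ by reflection and $\mathscr{M}_\nu$ by the disjoint supports, i.e.\ $\mathscr{L}_2(\R)=\mathscr{L}_2(\R_-)\oplus\mathscr{L}_2(\R_+)$. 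The only difference is that you spell out explicitly the unitary map transferring completeness from $\mathscr{L}_2(\R_+,\dif u)$ to $\mathscr{L}_2(\R_+,w_\nu)$, which the paper leaves implicit.
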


\begin{proof}
That $\mathscr{M}_{\nu}^+$ forms an orthogonal basis in $\mathscr{L}_2(\mathbb{R}_+,w_{\nu})$ follows from the fact that the functions
\begin{equation}
t^{\nu/2+1/2} \mathrm{L}_m^{(\nu+1)}(t) e^{-t/2} \quad \text{ for } \quad m \in \N_0
\end{equation}
form an orthonormal basis in $\mathscr{L}_2(\mathbb{R}_+)$ \citep[Theorem~5.7.1]{Szego1939}.
Furthermore, the norms of the functions in $\mathscr{M}_{\nu}^+$ are readily computed from the norms of the corresponding Laguerre polynomials:
\begin{equation*}
\begin{split}
\norm[1]{ \psi^{+}_{m,\nu} }_{ \mathscr{L}_2(\mathbb{R},w_{\nu}) }^2
&= \frac{(\nu!)^2}{(2\nu)!} \bigg( \frac{m!}{(m+\nu+1)!} \bigg)^2 \int_0^\infty [ \mathrm{L}_m^{(\nu+1)}(t) ]^2 t^{\nu+1} e^{-t} \dif t \\
&= \frac{(\nu!)^2}{(2\nu)!} \frac{m!}{(m+\nu+1)!}.
\end{split}
\end{equation*}
The statement pertaining to $\mathscr{M}_{\nu}^-$ follows from the symmetry \eqref{eq:negative_matern_laguerre} and the statement pertaining to $\mathscr{M}_{\nu}$ from the fact that $\mathscr{L}_2(\mathbb{R}) = \mathscr{L}_2(\mathbb{R}_-) \oplus \mathscr{L}_2(\mathbb{R}_+)$.
\end{proof}

\rev{
Because they do not decay to zero sufficiently fast at the origin, the functions in $\mathscr{M}_\nu^0$ are not members of $\mathscr{L}_2(\mathbb{R},w_{\nu})$. This will become evident in Section~\ref{sec:null-space}.
}

\subsection{Truncation error}

\begin{figure}[t]
  \centering
  \includegraphics[width=\textwidth]{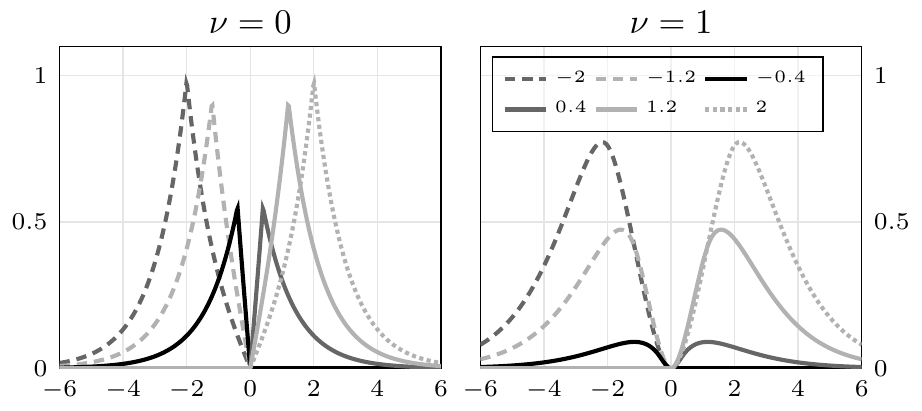}
  \caption{Translates $\rho_{\nu + 1/2}(\cdot, u)$ of the kernel in~\eqref{eq:rho-kernel} for $u \in \{-2, -1.2, -0.4, 0.4, 1.2, 2\}$. Observe that each translate is supported on the axis that $u$ lies on.}
  \label{fig:matern-rho-kernel}
\end{figure}

Define the kernel
\begin{equation} \label{eq:rho-kernel}
  \rho_{\nu + 1/2}(t, u) = \rho_{\nu + 1/2}^-(t, u) + \rho_{\nu + 1/2}^+(t, u)
\end{equation}
in terms of the kernels in~\eqref{eq:rho-kernels}.
A few translates of this kernel are displayed in \Cref{fig:matern-rho-kernel}.
The full Matérn kernel is therefore
\begin{equation*}
r_{\nu+1/2}(t, u) = \sum_{m=0}^\nu \psi_{m, \nu}^{0} (t) \psi_{m, \nu}^{0} (u) + \rho_{\nu + 1/2}(t,u).
\end{equation*}
From \Cref{prop:matern_laguerre_L2} we see that the kernel $\rho_{\nu + 1/2}$ is an element of $\mathscr{L}_2(\mathbb{R}\times \mathbb{R},w_{\nu} \otimes w_{\nu})$ and that its squared norm is given by
\begin{equation*}
  \begin{split}
    \int_{-\infty}^\infty\int_{-\infty}^\infty \rho_{\nu + 1/2}^2&(t,u) w_{\nu}(t)w_{\nu}(u) \dif t\dif u \\
    &= \sum_{m=0}^\infty \big( \norm[1]{ \psi^{-}_{m,\nu}}_{ \mathscr{L}_2(\mathbb{R},w_{\nu}) }^4
+ \norm[1]{ \psi^{+}_{m,\nu}}_{ \mathscr{L}_2(\mathbb{R},w_{\nu}) }^4 \!\big).
  \end{split}
\end{equation*}
This implies that $\rho_{\nu + 1/2}$ defines a Hilbert--Schmidt operator on $\mathscr{L}_2(\mathbb{R},w_{\nu})$ via~\eqref{eq:mercer-integral-operator} and that the above norm is precisely the squared Hilbert--Schmidt norm of this operator~\citep[Chapter~1, \S 1]{Kuo1975}.
Next the approximation errors for appropriately truncated approximations of the Mat\'ern kernel are examined in terms of the Hilbert--Schmidt norm.
Let $n \geq 1$ and define the truncated kernels
\begin{align}
  \rho_{\nu + 1/2, n}(t, u) &= \sum_{m=0}^{n-1} \psi_{m, \nu}^{-}(t) \psi_{m, \nu}^{-}(u) + \sum_{m=0}^{n-1} \psi_{m,\nu}^{+}(t) \psi_{m, \nu}^{+}(u), \\
  r_{\nu + 1/2, n}(t, u) &= \sum_{m=0}^\nu \psi_{m, \nu}^{0} (t) \psi_{m, \nu}^{0} (u)+ \rho_{\nu + 1/2, n}(t, u). \label{eq:matern-truncation}
\end{align}
Observe that $r_{\nu + 1/2,n}$ is a finite expansion of $\nu + 1 + 2n$ terms.
Some truncations of Matérn kernels are displayed in \Cref{fig:matern-truncations}.

\begin{figure}[t]
  \centering
  \includegraphics[width=\textwidth]{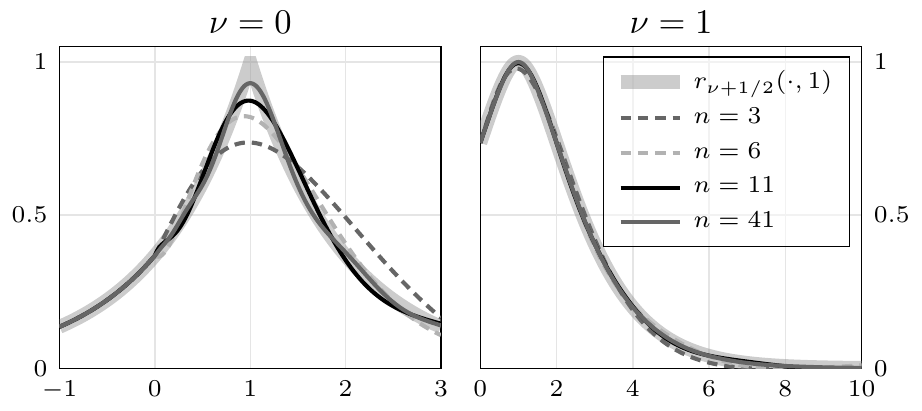}
  \caption{The truncation in~\eqref{eq:matern-truncation} for two Matérn kernels. Because the second kernel argument has been fixed to a positive value, the truncations are exact on the negative real line by~\eqref{eq:matern-simplification-sign}.}
  \label{fig:matern-truncations}
\end{figure}

\begin{proposition}[Matérn truncation]
  \label{prop:matern-truncation-error}
  For every $n \in \N$ it holds that
\begin{equation*}
  \Bigg( \int_{-\infty}^\infty\int_{-\infty}^\infty ( r_{\nu+1/2}(t,u) - r_{\nu+1/2,n}(t,u) )^2 w_{\nu}(t)w_{\nu}(u) \dif t\dif u \Bigg)^{1/2} \leq \frac{c_\nu}{n^{\nu+1/2}},
\end{equation*}
where
\begin{equation*}
  c_\nu = \frac{(\nu!)^2}{(2\nu)!} \sqrt{ \frac{2(2\nu + 2)}{2\nu + 1} } \sim \frac{1 }{2^{2\nu} } \sqrt{ \frac{2\pi(2\nu + 2) \nu}{2\nu + 1} } \quad \text{ as } \quad \nu \to \infty.
\end{equation*}
\end{proposition}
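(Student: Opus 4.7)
The plan is to start from the tail decomposition
\begin{equation*}
r_{\nu+1/2}(t,u) - r_{\nu+1/2,n}(t,u) = \sum_{m=n}^\infty \psi_{m,\nu}^-(t)\psi_{m,\nu}^-(u) + \sum_{m=n}^\infty \psi_{m,\nu}^+(t)\psi_{m,\nu}^+(u),
\end{equation*}
in which the null-space contribution has cancelled. By \Cref{prop:matern_laguerre_L2}, the families $\{\psi_{m,\nu}^+\}$ and $\{\psi_{m,\nu}^-\}$ are each orthogonal in $\mathscr{L}_2(\R, w_{\nu})$; since $\psi_{m,\nu}^+$ is supported on $[0,\infty)$ and $\psi_{m,\nu}^-$ on $(-\infty,0]$, rank-one tensors built from opposite families are also mutually orthogonal in $\mathscr{L}_2(\R \times \R, w_{\nu} \otimes w_{\nu})$. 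Parseval then collapses the squared Hilbert--Schmidt norm to a sum of fourth powers of norms, and plugging in the closed-form expression from \Cref{prop:matern_laguerre_L2} gives
\begin{equation*}
\iint (r_{\nu+1/2}-r_{\nu+1/2,n})^2\, w_\nu(t) w_\nu(u) \dif t \dif u = \frac{2(\nu!)^4}{((2\nu)!)^2} \sum_{m=n}^\infty \bigg( \frac{m!}{(m+\nu+1)!} \bigg)^{\!2}.
\end{equation*}

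I would then reduce the combinatorial tail to a $p$-series tail via the elementary bound $m!/(m+\nu+1)! = \prod_{k=1}^{\nu+1}(m+k)^{-1} \leq (m+1)^{-(\nu+1)}$, which yields $\sum_{m\geq n}[m!/(m+\nu+1)!]^2 \leq \sum_{k\geq n+1} k^{-(2\nu+2)}$. Peeling off the leading term and applying the integral test to the rest gives the estimate $(n+1)^{-(2\nu+2)} + [(2\nu+1)(n+1)^{2\nu+1}]^{-1}$. Factoring out $(n+1)^{-(2\nu+1)}$ and using both $(n+1)^{-(2\nu+1)} \leq n^{-(2\nu+1)}$ and $(n+1)^{-1} \leq 1$ (valid for $n \geq 1$) collapses the bracket to $(2\nu+2)/(2\nu+1)$. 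Combining with the prefactor and taking square roots produces exactly $c_\nu / n^{\nu+1/2}$.

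The asymptotic for $c_\nu$ as $\nu \to \infty$ then follows from the central-binomial Stirling estimate $(\nu!)^2/(2\nu)! = 1/\binom{2\nu}{\nu} \sim \sqrt{\pi \nu}/2^{2\nu}$. There is no genuine obstacle: the argument amounts to Parseval followed by an elementary tail estimate. The only subtlety worth flagging is that the natural shifted integral bound $\int_n^\infty t^{-(2\nu+2)} \dif t = [(2\nu+1)n^{2\nu+1}]^{-1}$ would actually yield a strictly smaller constant than the stated one, so the peel-off-plus-integrate split must be chosen precisely as above in order to reproduce the factor $(2\nu+2)/(2\nu+1)$ appearing in $c_\nu$.
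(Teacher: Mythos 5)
Your proposal is correct and takes essentially the same approach as the paper: the same tail decomposition, the same reduction via \Cref{prop:matern_laguerre_L2} to $2\big((\nu!)^2/(2\nu)!\big)^2\sum_{m\geq n}\big(m!/(m+\nu+1)!\big)^2$, and the same peel-off-one-term-plus-integral-test tail estimate. Your intermediate bound $m!/(m+\nu+1)!\leq (m+1)^{-(\nu+1)}$ is marginally sharper than the paper's $m^{-(\nu+1)}$, but after the final relaxations it lands on the identical constant $c_\nu$, and your Stirling asymptotic matches as well.
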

\begin{proof}
Firstly, the truncation error is
\begin{equation*}
  \begin{split}
    r_{\nu+1/2}(t,u) - r_{\nu+1/2,n}(t,u) &= \rho_{\nu + 1/2}(t,u) - \rho_{\nu + 1/2,n}(t,u) \\
    &= \sum_{m=n}^\infty \big[ \psi^{-}_{m,\nu}(t)\psi^{-}_{m,\nu}(u) + \psi^{+}_{m,\nu}(t)\psi^{+}_{m,\nu}(u) \big].
    \end{split}
\end{equation*}
Using \Cref{prop:matern_laguerre_L2}, the squared norm of the truncation error is straight-forwardly computed as
\begin{equation*}
\begin{split}
  \int_{-\infty}^\infty\int_{-\infty}^\infty ( r_{\nu+1/2}&(t,u) - r_{\nu+1/2,n}(t,u) )^2 w_{\nu}(t)w_{\nu}(u) \dif t\dif u \\
  &= \sum_{m=n}^\infty \big( \norm[1]{ \psi^{-}_{m,\nu}}_{ \mathscr{L}_2(\mathbb{R},w_{\nu}) }^4 + \norm[1]{ \psi^{+}_{m,\nu}}_{ \mathscr{L}_2(\mathbb{R},w_{\nu}) }^4 \big) \\
  &= 2 \bigg( \frac{(\nu!)^2}{(2\nu)!} \bigg)^2 \sum_{m=n}^\infty \bigg( \frac{m!}{(m+\nu+1)!} \bigg)^2 \\
  &\leq 2 \bigg( \frac{(\nu!)^2}{(2\nu)!} \bigg)^2 \sum_{m=n}^\infty \frac{1}{m^{2\nu + 2}}.
\end{split}
\end{equation*}
The sum may be estimated with an integral as
\begin{equation*}
  \begin{split}
    \sum_{m=n}^\infty \frac{1}{m^{2\nu + 2}} &\leq \frac{1}{n^{2\nu+2}} + \int_n^\infty \frac{1}{t^{2\nu+2}} \dif t = \frac{1}{n^{2\nu+2}} + \frac{1}{2\nu + 1} \, \frac{1}{n^{2\nu+1}} \leq \frac{2\nu+2}{2\nu+1} \, \frac{1}{n^{2\nu+1}},
\end{split}
\end{equation*}
where $n \geq 1$ was used in the last inequality.
This yields the desired upper bound.
The asymptotic equivalence for $c_\nu$ as $\nu \to \infty$ follows from Stirling's formula.
\end{proof}

\subsection{The null-space $\mathscr{M}_{\nu}^0$} \label{sec:null-space}

\begin{figure}[t]
  \centering
  \includegraphics[width=\textwidth]{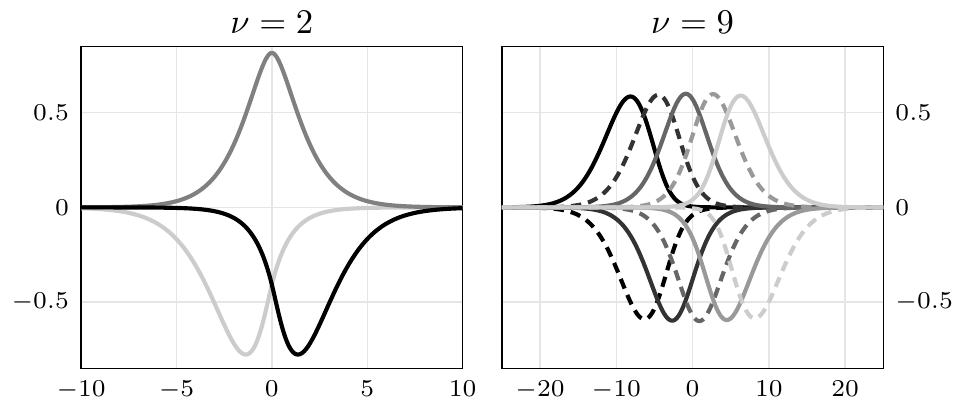}
  \caption{The null-space Matérn--Laguerre functions $\psi_{m, \nu}^{0}$ in~\eqref{eq:psi-null} for $\nu = 2$ and $\nu = 9$.}
  \label{fig:matern-null-space}
\end{figure}

In view of \Cref{prop:matern_laguerre_L2}, $\mathscr{M}_{\nu}^0$ is left as the odd set out.
From~\eqref{eq:h-hat-matern} and \eqref{eq:matern_laguerre_functions} we compute that
\begin{equation*} \label{eq:nullspace_functions}
  \hat{\psi}^{0}_{m,\nu}(\omega) = (-1)^{\nu+1} \hat{h}_{\nu + 1/2}^*(\omega) \hat{\varphi}_{m}(\omega).
\end{equation*}
Furthermore, the functions
\begin{equation*}
(i\omega + 1)^{\nu+1}  \hat{\psi}^{0}_{m,\nu}(\omega),
\end{equation*}
when viewed as functions of $i\omega$, have no poles in the left half-plane.
Therefore $\smash[b]{\mathscr{M}_{\nu}^0}$ are annihilated on the positive real line by the differential operator $\smash[b]{(\mathrm{D} + 1)^{\nu+1}}$.
That is,
\begin{equation*}
(\mathrm{D} + 1)^{\nu+1} \psi_{m,\nu}^{0}(t) = 0 \quad \text{ for every } \quad t > 0.
\end{equation*}
For this reason we refer to these functions as the null-space functions.
The null space functions have a symmetry property similar to that of the functions $\mathscr{M}_{\nu}$ given by~\eqref{eq:psi-positive} and~\eqref{eq:psi-negative}.
\begin{proposition}[Null-space symmetry]
The null-space functions satisfy
\begin{equation*}
\psi^{0}_{\nu-m,\nu}(t) = (-1)^\nu \psi^{0}_{m,\nu}(-t) \quad \text{ and } \quad \hat{\psi}^{0}_{\nu-m,\nu}(\omega) = (-1)^\nu\hat{\psi}^{0*}_{m,\nu} (\omega).
\end{equation*}
for $m = 0, 1, \ldots, \nu$.
\end{proposition}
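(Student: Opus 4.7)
My plan is to derive the Fourier-domain identity first and then transfer it to the time domain by Fourier inversion.

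The first observation is that by the definition \eqref{eq:psi-null} of the null-space functions, we have $\hat{\psi}^{0}_{\nu-m,\nu} = \hat{\psi}_{-m-1,\nu}$ and $\hat{\psi}^{0}_{m,\nu} = \hat{\psi}_{-\nu-1+m,\nu}$, so both sides of the claimed Fourier-domain identity can be rewritten in terms of the unnormalised Matérn--Laguerre functions indexed by integers in $\{-\nu-1, \ldots, -1\}$. The indexing gymnastics is purely bookkeeping.

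The next step, and the only one requiring any argument, is to extend the conjugate symmetry \eqref{eq:matern_laguerre_conjugate_symmetry} from $m \in \mathbb{N}_0$ to all $m \in \mathbb{Z}$. This should drop out directly from the explicit formula \eqref{eq:matern_laguerre_functions}: substituting $\omega \to -\omega$ and pulling the factors of $-1$ through the exponents $m$ and $m + \nu + 2$ yields $\hat{\psi}_{m,\nu}(-\omega) = (-1)^\nu \hat{\psi}_{-\nu-m-2,\nu}(\omega)$ for every $m \in \mathbb{Z}$. Combined with the observation that the Matérn--Laguerre functions are real-valued --- being real linear combinations of the real Laguerre functions via \eqref{eq:matern-laguerre-functions} --- this gives $\hat{\psi}_{m,\nu}^*(\omega) = \hat{\psi}_{m,\nu}(-\omega) = (-1)^\nu \hat{\psi}_{-\nu-m-2,\nu}(\omega)$ for all $m \in \mathbb{Z}$. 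Applying this extended symmetry with the index $-\nu-1+m$, where $m \in \{0, 1, \ldots, \nu\}$, immediately yields $\hat{\psi}_{-m-1,\nu}(\omega) = (-1)^\nu \hat{\psi}^*_{-\nu-1+m,\nu}(\omega)$, which is exactly the claimed Fourier-domain identity.

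For the time-domain identity, I would invoke the standard fact that for a real function $f$, the Fourier transform of $t \mapsto f(-t)$ equals $\hat{f}^*(\omega)$. Fourier inversion applied to the Fourier-domain identity then gives $\psi^{0}_{\nu-m,\nu}(t) = (-1)^\nu \psi^{0}_{m,\nu}(-t)$. I expect no genuine obstacle in this proof; the only mild subtlety is that \eqref{eq:matern_laguerre_conjugate_symmetry} is stated only for $m \in \mathbb{N}_0$, so the extension to negative indices must be verified before it can be applied. One could equally well bypass the conjugate symmetry altogether and compare $\hat{\psi}^{0}_{\nu-m,\nu}$ and $\hat{\psi}^{0*}_{m,\nu}$ directly from the explicit formula, but routing through the already-established symmetry is cleaner.
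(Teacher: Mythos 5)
Your proof is correct, but it establishes the Fourier-domain identity by a different computation than the paper. You extend the conjugate symmetry \eqref{eq:matern_laguerre_conjugate_symmetry} from $m \in \N_0$ to all $m \in \Z$ by substituting $\omega \mapsto -\omega$ in the closed-form rational expression \eqref{eq:matern_laguerre_functions} (the sign bookkeeping $(-1)^m/(-1)^{m+\nu+2} = (-1)^\nu$ checks out, as does the index match $-\nu - (-\nu-1+m) - 2 = -m-1$), then combine this with the real-valuedness of $\psi_{m,\nu}$ and specialise to the null-space indices. The paper instead works from the binomial-sum representation \eqref{eq:matern_laguerre_binomial_expression}: it applies the conjugate symmetry of the Laguerre functions $\hat{\varphi}_{-m-1}^*(\omega) = -\hat{\varphi}_m(\omega)$ termwise and then reverses the summation index $k \mapsto \nu+1-k$, which produces the factor $(-1)^\nu$ combinatorially. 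Both routes end with Fourier inversion for the time-domain statement. Your approach buys a slightly stronger intermediate fact --- the conjugate symmetry for every integer index, which the paper only states for $m \in \N_0$ --- and avoids the sum reindexing; the paper's approach stays entirely within the Laguerre-function calculus it has already set up and never needs to argue separately that the $\psi_{m,\nu}$ are real. You are right that the restriction of \eqref{eq:matern_laguerre_conjugate_symmetry} to $m \in \N_0$ is the one point that must be addressed before that identity can be invoked, and your verification of the extension closes that gap.
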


\begin{proof}
Starting from \eqref{eq:matern_laguerre_binomial_expression}, using the conjugate symmetry of Laguerre functions, and then changing the order of summation gives
\begin{equation*}
\begin{split}
\hat{\psi}^{0}_{m,\nu}(\omega) &= \frac{1}{\sqrt{\smash[b]{2}}} \frac{\nu!}{\sqrt{\smash[b]{(2\nu)!}}} \sum_{k=0}^{\nu+1} {\nu+1\choose k} (-1)^k \hat{\varphi}_{-\nu-1+m+k}(\omega) \\
&= \frac{1}{\sqrt{\smash[b]{2}}} \frac{\nu!}{\sqrt{\smash[b]{(2\nu)!}}} \sum_{k=0}^{\nu+1} {\nu+1\choose k} (-1)^k \hat{\varphi}_{-(\nu-m-k)-1}(\omega) \\
&= -\frac{1}{\sqrt{\smash[b]{2}}} \frac{\nu!}{\sqrt{\smash[b]{(2\nu)!}}} \sum_{k=0}^{\nu+1} {\nu+1\choose k} (-1)^k \hat{\varphi}_{\nu-m-k}^*(\omega)\\
&= -\frac{1}{\sqrt{\smash[b]{2}}} \frac{\nu!}{\sqrt{\smash[b]{(2\nu)!}}} \sum_{k=0}^{\nu+1} {\nu+1\choose \nu+1-k} (-1)^{\nu+1-k} \hat{\varphi}_{\nu-m-(\nu+1-k)}^*(\omega) \\
&= (-1)^\nu\frac{1}{\sqrt{\smash[b]{2}}} \frac{\nu!}{\sqrt{\smash[b]{(2\nu)!}}} \sum_{k=0}^{\nu+1} {\nu+1\choose k} (-1)^{k} \hat{\varphi}_{-\nu-1+\nu-m+k}^*(\omega) \\
&= (-1)^\nu  \hat{\psi}^{0*}_{\nu-m,\nu} (\omega),
\end{split}
\end{equation*}
which is the Fourier domain symmetry.
The time domain symmetry is then obtained from Fourier inversion.
\end{proof}

\begin{example}[Null-space functions]
  \label{example:null-space}
The set $\mathscr{M}_{0}^0$ (i.e., $\nu = 0$) consists of the function
\begin{equation*}
\psi^{0,(0)}_{0}(t) = -e^{- \abs[0]{t}}.
\end{equation*}
The set $\mathscr{M}_{1}^0$ (i.e., $\nu = 1$) consists of the functions
\begin{equation*}
\psi^{0}_{0,1}(t) = \frac{1}{\sqrt{\smash[b]{2}}} (   2 t e^{ t} \mathbf{1}_{(-\infty,0)}(t) + e^{- \abs[0]{t}}  ) \quad \text{ and } \quad \psi^{0}_{1,1}(t) = -\frac{1}{\sqrt{\smash[b]{2}}} ( 2 t e^{- t} \mathbf{1}_{[0,\infty)}(t) + e^{- \abs[0]{t}} ).
\end{equation*}
The set $\mathscr{M}_{2}^0$ (i.e., $\nu = 2$) consists of the functions
\begin{align*}
\psi^{0}_{0,2}(t) &=  \frac{2}{ \sqrt{\smash[b]{4!}}} \big( 2(- t^2 + t) e^{t} \mathbf{1}_{(-\infty,0)}(t) - e^{-\abs[0]{t}} \big), \\
\psi^{0}_{1,2}(t) &= \frac{4}{ \sqrt{\smash[b]{4!}}} ( \abs[0]{t} + 1 )e^{-\abs[0]{t}}, \\
\psi^{0}_{2,2}(t) &= \frac{2}{ \sqrt{\smash[b]{4!}}} \big( -2(t^2 + t )e^{-t} \mathbf{1}_{[0,\infty)}(t) - e^{-\abs[0]{t}} \big).
\end{align*}
\end{example}

Some null space functions are depicted in \Cref{fig:matern-null-space}.
Unlike the basis functions $\mathscr{M}_{\nu}^+$ depicted in \Cref{fig:matern-non-null-space}, the null space functions are supported on the entire real line.
For $d = \abs[0]{t - u}$, a Matérn kernel can be written as
\begin{equation*}
  r_{\nu + 1/2}(t, u) = r_{\nu + 1/2}(0, d) = \sum_{m=0}^\nu \psi_{m, \nu}^{0}(0) \psi_{m, \nu}^{0}(d),
\end{equation*}
where we have used~\eqref{eq:matern-simplification-positive} and the fact that the kernel $\rho_{\nu + 1/2}^+(t, u)$ vanishes if $t = 0$ or $u=0$.
Upon substitution of the expressions in \Cref{example:null-space} we obtain the well-known explicit forms of Matérn kernels in terms of $d$, such as
\begin{equation*}
  r_{3/2}(t, u) = ( 1 + d ) e^{- d} \quad \text{ and } \quad r_{5/2}(t, u) = \bigg(1 + d + \frac{d^2}{3} \bigg) e^{- d}.
\end{equation*}
%


\section{Expansions of the Cauchy kernel}
\label{sec:cauchy}

The Cauchy kernel and its Fourier transform are
\begin{equation} \label{eq:cauchy-kernel}
r(t,u) = \frac{1}{1  + (t-u)^2 } \quad \text{ and } \quad \hat{\Phi}(\omega) = \pi e^{-\abs[0]{\omega}}.
\end{equation}
The Cauchy kernel is thus a Fourier dual to the Mat\'ern kernel of smoothness index $\alpha = 1/2$ (i.e., $\nu = 0$).
In what follows this will inform the construction of an RKHS basis.
A square-root of $\hat{\Phi}(\omega)$ is then given by
\begin{equation} \label{eq:h-cauchy}
\hat{h}(\omega) = \hat{\Phi}(\omega)^{1/2} = \sqrt{\pi} \, e^{ - \abs[0]{\omega} / 2}.
\end{equation}

\subsection{Expansion in complex-valued Cauchy--Laguerre functions}
\label{sec:cauchy-complex}
In view of the Fourier dualism with the Mat\'ern-$\frac{1}{2}$ kernel and the fact that the Fourier transform is an isometry from $\mathscr{L}_2(\R)$ to $\mathscr{L}_2(\R, 1/2\pi)$,
a straight-forward way to construct a suitable basis of $\mathscr{L}_2(\R)$ for Theorem~\ref{thm:main-theorem} is to modify the Laguerre functions from \Cref{sec:laguerre-functions} and consider the functions $\sqrt{\pi}\varphi_{m}(\omega/2)$.
The Fourier transforms of these functions are an orthonormal basis of $\mathscr{L}_2(\R)$, so that Theorem~\ref{thm:main-theorem} and~\eqref{eq:h-cauchy} yield the RKHS basis functions
\begin{equation*}
\hat{\psi}_{m}(\omega) = \sqrt{\pi} \, e^{ - \abs[0]{\omega} / 2 } \sqrt{\pi} \, \varphi_{m}(\omega/2)
\end{equation*}
in the Fourier domain.
Since their inverse Fourier transforms are complex-valued, we call these functions the \emph{complex-valued Cauchy--Laguerre functions}.
For $m \in \N_0$, Fourier inversion gives
\begin{equation*}
\begin{split}
  \psi_{m}(t) = \frac{1}{2} \int_{-\infty}^\infty e^{-\abs[0]{\omega} / 2} \varphi_{m}(\omega/2) e^{i\omega t} \dif \omega
&= \int_{-\infty}^\infty e^{-\abs[0]{\omega}} \varphi_{m}(\omega) e^{i\omega 2t} \dif \omega  \\
&=  \int_0^\infty e^{-\abs[0]{\omega}} \varphi_{m}(\omega) e^{i\omega 2t} \dif \omega \\
&=   \int_0^\infty e^{-\abs[0]{\omega}} \varphi_{m}(\omega) e^{-i\omega ( -2t - i )} \dif \omega \\
&= \hat{\varphi}_m(-2t-i) \\
& = -\frac{1}{\sqrt{2}} \frac{( it )^m}{(it - 1)^{m+1}}.
\end{split}
\end{equation*}
Similarly, for negative indices we get
\begin{equation*}
\begin{split}
  \psi_{-m}(t) = \frac{1}{2} \int_{-\infty}^\infty e^{-\abs[0]{\omega} / 2} \varphi_{-m}(\omega/2) e^{i\omega t} \dif \omega
&=  \int_{-\infty}^\infty e^{-\abs[0]{\omega} } \varphi_{-m}(\omega) e^{i\omega 2t} \dif \omega \\
&= -\int_{-\infty}^0 e^{-\abs[0]{\omega} } \varphi_{m-1}(-\omega) e^{i\omega 2t} \dif \omega \\
&= -\int_0^\infty e^{-\abs[0]{\omega} } \varphi_{m-1}(\omega) e^{-i\omega 2t} \dif \omega \\
&= -\int_0^\infty \varphi_{m-1}(\omega) e^{-i\omega (2t  -i)} \dif \omega \\
&=  -\hat{\varphi}_{m-1}(2t - i) \\
&= -\frac{1}{\sqrt{2}} \frac{(it)^{m-1}}{\big( it + 1 \big)^m}.
\end{split}
\end{equation*}
To summarise, the complex valued Cauchy--Laguerre functions are
\begin{subequations} \label{eq:cauch-laguerre-funcs}
\begin{align}
  \psi_{m}(t) &= - \frac{1}{\sqrt{2}} \frac{ (it)^m }{(it-1)^{m+1}} \quad \text{ for } \quad m \in \mathbb{N}_0, \\
  \psi_{-m-1}(t) &= -\frac{1}{\sqrt{2}} \frac{ (it)^m }{(it+1)^{m+1} } \quad \text{ for } \quad m \in \mathbb{N}_0.
\end{align}
\end{subequations}
They have the conjugate symmetry property
\begin{equation*}
\psi_{m}^*(t) = -\psi_{-m-1}(t) = \psi_{m}(-t) \quad \text{ for } \quad m \in \Z.
\end{equation*}
An expansion of the Cauchy kernel~\eqref{eq:cauchy-kernel} in terms of complex-valued Cauchy--Laguerre functions is thus given by
\begin{equation*}
r(t,u) = \sum_{m=-\infty}^\infty \psi_{m}^*(t)\psi_{m}(u).
\end{equation*}
This expansion is remarkably easy to verify by independent means since geometric summation and conjugate symmetry yield
\begin{equation*}
\sum_{m=0}^\infty \psi_{m}^*(t)\psi_{m}(u) = \frac{1}{2} \frac{1}{ (it - 1)(-iu - 1) - tu  }
\end{equation*}
and
\begin{equation*}
  \sum_{m=-\infty}^{-1} \psi_{m}^*(t)\psi_{m}(u) = \Bigg( \sum_{m=0}^\infty \psi_{m}^*(t)\psi_{m}(u) \Bigg)^*.
\end{equation*}
Hence
\begin{equation*}
\begin{split}
\sum_{m=-\infty}^\infty \psi_{m}^*(t)\psi_{m}(u) = \frac{1}{2} \frac{1}{ 1 - i(t-u)   } + \frac{1}{2} \frac{1}{1 + i(t-u)  } = \frac{1}{1 + (t-u)^2},
\end{split}
\end{equation*}
which indeed is the Cauchy kernel.
An appropriate $\mathscr{L}_2(\R, w)$ space in which the complex-valued Cauchy--Laguerre functions form a complete orthogonal set remains elusive to us.
However, just as with the Mat\'ern--Laguerre expansions in \Cref{sec:matern}, the present expansion is very good at origin since all but two terms vanish:
\begin{equation*}
r(t,0) = \sum_{m=-\infty}^\infty \psi_{m}^*(t)\psi_{m}(0) = - \big( \psi_{-1}(t) + \psi_{0}(t) \big).
\end{equation*}

\subsection{Expansion in \rev{real-valued Cauchy--Laguerre functions}}
\label{sec:cauchy-real}

\begin{figure}[t]
  \centering
  \includegraphics[width=\textwidth]{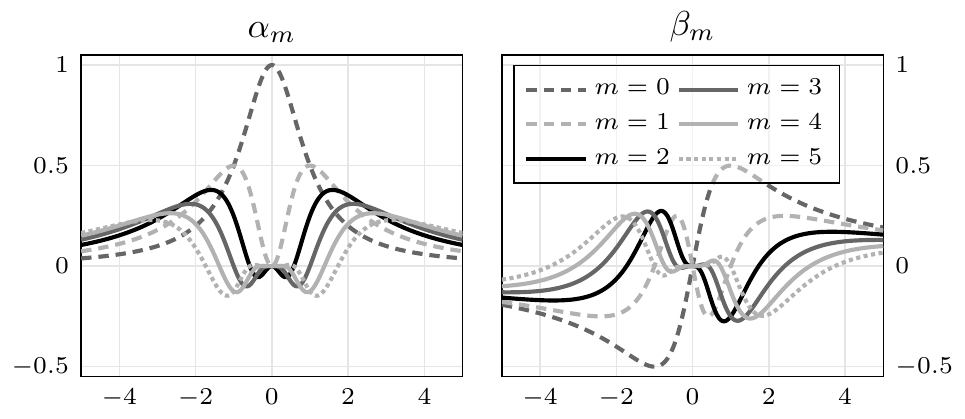}
  \caption{The \rev{real-valued Cauchy--Laguerre functions} $\alpha_{m}$ and $\beta_{m}$ in~\eqref{eq:trig-cauchy-laguerre}.}
  \label{fig:cauchy-basis}
\end{figure}

It would be desirable to obtain a real-valued basis for the Cauchy RKHS.
This can be done by scaling the real and imaginary parts of $\hat{\psi}_{m}$
in a similar manner as was done for the Laguerre functions in~\cite{Christov1982}.
This gives the RKHS basis functions
\begin{equation} \label{eq:trig-cauchy-laguerre}
\alpha_{m}(t) =  \frac{1}{\sqrt{2}} \big( \psi_{m}(t) +  \psi_{m}^*(t)  \big) \quad \text{ and } \quad \beta_{m}(t) =  \frac{1}{i\sqrt{2}} \big( \psi_{m}(t) -  \psi_{m}^*(t)  \big)
\end{equation}
for $m \in \N_0$, where $\psi_{m}$ are the complex-valued Cauchy--Laguerre functions in~\eqref{eq:cauch-laguerre-funcs}.
We call the functions $\alpha_{m}$ and $\beta_{m}$ the \emph{\rev{real-valued Cauchy--Laguerre functions}}.
The binomial theorem yields the explicit expressions
\begin{align*}
\alpha_{m}(t) &= \frac{1}{2} \frac{(-1)^m(it)^m}{ (t^2+1)^{m+1}} \sum_{k=0}^{m+1} {m+1\choose k} (it)^k \big( 1 - (-1)^{m+1-k}  \big),\\
\beta_{m}(t) &= \frac{1}{i2} \frac{(-1)^m(it)^m}{ (t^2+1)^{m+1}} \sum_{k=0}^{m+1} {m+1\choose k} (it)^k \big( 1 + (-1)^{m+1-k}  \big),
\end{align*}
which can be transformed into expressions of only real parameters by considering even and odd $m$ separately.
This yields
\begin{subequations}\label{eq:real_cauchy_laguerre_explicit}
\begin{align}
\alpha_{2m}(t) &= \frac{ (-1)^m t^{2m}}{ (t^2+1)^{2m+1}} \sum_{k=0}^{m} {2m+1\choose 2k} (-1)^k t^{2k} , \\
\alpha_{2m+1}(t) &= \frac{ (-1)^m t^{2m+1}}{ (t^2+1)^{2m+2}} \sum_{k=0}^{m} {2m+2\choose 2k+1} (-1)^k t^{2k+1}, \\
\beta_{2m}(t) &= \frac{ (-1)^m t^{2m}}{ (t^2+1)^{2m+1}} \sum_{k=0}^{m} {2m+1\choose 2k+1} (-1)^k t^{2k+1} , \\
\beta_{2m+1}(t) &= \frac{ (-1)^{m+1} t^{2m+1}}{ (t^2+1)^{2m+2}} \sum_{k=0}^{m+1} {2m+2\choose 2k} (-1)^k t^{2k} .
\end{align}
\end{subequations}
An expansion of the Cauchy kernel~\eqref{eq:cauchy-kernel} in terms of real functions is thus given by
\begin{equation} \label{eq:cauchy-expansion-trig}
r(t,u) = \sum_{m=0}^\infty \alpha_{m}(t)\alpha_{m}(u) + \sum_{m=0}^\infty \beta_{m}(t)\beta_{m}(u).
\end{equation}
At the origin, this reduces to the finite term expansion
\begin{equation*}
  r(t,0) = \alpha_{0}(t)\alpha_{0}(0).
\end{equation*}
The basis functions $\alpha_{m}$ and $\beta_{m}$ and truncations of the expansion~\eqref{eq:cauchy-expansion-trig} are displayed in \Cref{fig:cauchy-basis,fig:cauchy-truncation}.

\begin{figure}[t]
  \centering
  \includegraphics[width=\textwidth]{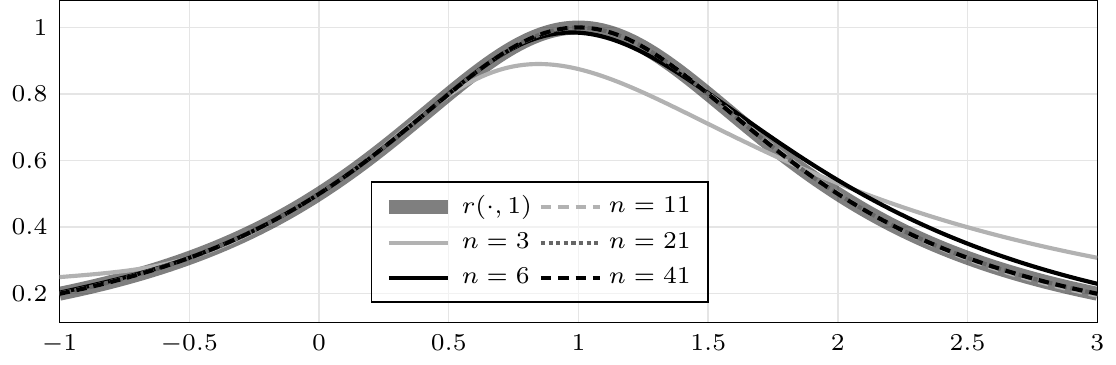}
  \caption{Truncations $\sum_{m=0}^{n-1} \alpha_{m}(t)\alpha_{m}(u) + \sum_{m=0}^{n-1} \beta_{m}(t)\beta_{m}(u)$ of the Cauchy expansion in~\eqref{eq:cauchy-expansion-trig}.}
  \label{fig:cauchy-truncation}
\end{figure}


\section{Expansion of the Gaussian kernel}
\label{sec:gaussian}

The Gaussian kernel and its Fourier transform are
\begin{equation}
  \label{eq:gaussian-kernel}
  r(t, u) = \exp\bigg( \! -\frac{1}{2} (t - u)^2 \bigg) \quad \text{ and } \quad \hat{\Phi}(\omega) = \sqrt{2\pi} \, e^{-\omega^2/2}.
\end{equation}
A square-root is
\begin{equation*}
  \hat{h}(\omega) = \hat{\Phi}(\omega)^{1/2} = (2\pi)^{1/4} e^{-\omega^2/4},
\end{equation*}
so that taking the inverse Fourier transform gives the function $h$ in \Cref{thm:main-theorem} as
\begin{equation} \label{eq:h-gaussian}
  h(t) = 2^{1/4} \pi^{-1/4} e^{-t^2}.
\end{equation}

\subsection{Expansion for the Gaussian kernel}
\label{sec:gaussian-expansion}

As an orthonormal basis of $\mathscr{L}_2(\R)$ we use the \emph{Hermite functions} (for them being an orthonormal basis, see~\cite[Theorem~5.7.1]{Szego1939})
\begin{equation}
  \label{eq:hermite-function}
  \varphi_{m}(t) = \sqrt{ \frac{1}{2^m m! \sqrt{\pi}} } \, e^{-t^2/2} \mathrm{H}_m(t ) \quad \text{ for } \quad m \in \N_0.
\end{equation}
Here $\mathrm{H}_m$ is the $m$th physicist's Hermite polynomial given by
\begin{equation}
  \label{eq:hermite-polynomial}
  \mathrm{H}_m(t) = m! \sum_{k=0}^{\lfloor m / 2 \rfloor} \frac{(-1)^k}{k! (m - 2k)!} (2 t)^{m - 2k}.
\end{equation}
By \Cref{thm:main-theorem}, the functions
\begin{equation*}
    \psi_{m}(t) = \int_{-\infty}^\infty h(t - \tau) \varphi_{m}(\tau) \dif \tau = \bigg( \frac{\sqrt{2} }{\pi} \bigg)^{1/2} \sqrt{ \frac{1}{2^m m! } } \int_{-\infty}^\infty e^{-(t-\tau)^2} e^{- \tau^2/2} \mathrm{H}_m( \tau ) \dif \tau
\end{equation*}
form an orthonormal basis of the RKHS of the Gaussian kernel~\eqref{eq:gaussian-kernel}.
Equation~(17) in Section~16.5 of~\cite{Erdelyi1954} states that
\begin{equation*}
  \int_{-\infty}^\infty e^{-(s - \tau)^2} \mathrm{H}_m(a \tau) \dif \tau = \sqrt{\pi} (1 - a^2)^{m/2} \mathrm{H}_m\bigg( \frac{a s}{\sqrt{1-a^2}} \bigg)
\end{equation*}
for any reals $s$ and $a$.
Completing the square, doing a change of variables, and using this equation yields
\begin{equation*}
  \begin{split}
    \int_{-\infty}^\infty e^{-(t-\tau)^2} e^{-\tau^2/2} \mathrm{H}_m( \tau ) \dif \tau &= \sqrt{\frac{2}{3}} e^{- t^2 / 3} \int_{-\infty}^\infty e^{-(\sqrt{\smash[b]{2/3}} \, t - \tau)^2} \mathrm{H}_m \big( \sqrt{\smash[b]{2/3}} \, \tau \big) \dif \tau \\
    &= \sqrt{ \frac{2\pi}{3} } 3^{-m/2} e^{-t^2/3} \mathrm{H}_m\bigg( \frac{2t}{\sqrt{3}} \bigg).
    \end{split}
\end{equation*}
We thus obtain the basis functions
\begin{equation}
  \label{eq:gaussian-basis}
  \psi_{m}(t) = \bigg( \frac{2 \sqrt{2}}{3} \bigg)^{1/2} \sqrt{ \frac{1}{6^m m!} }  e^{-t^2/3} \mathrm{H}_m\bigg( \frac{2 t}{\sqrt{3}} \bigg) \quad \text{ for } \quad m \in \N_0
\end{equation}
and the resulting expansion
\begin{equation}
  \label{eq:gaussian-expansion}
  r(t, u) = \sum_{m=0}^\infty \psi_{m}(t) \psi_{m}(u)
\end{equation}
of the Gaussian kernel in~\eqref{eq:gaussian-kernel}.
\Cref{fig:gaussian-basis} displays some of the basis functions.

\begin{figure}[t]
  \centering
  \includegraphics[width=\textwidth]{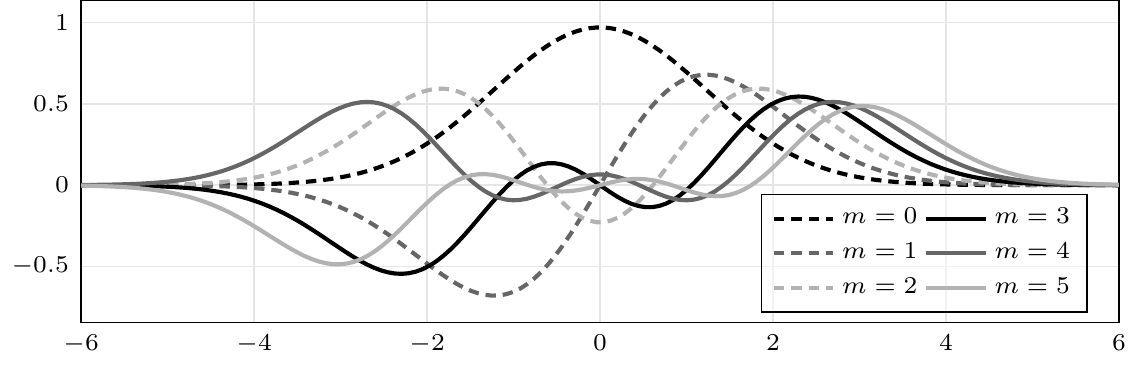}
  \caption{The first six basis functions $\psi_{m}$ in~\eqref{eq:gaussian-basis} of the Gaussian kernel~\eqref{eq:gaussian-kernel}.}
  \label{fig:gaussian-basis}
\end{figure}

Note that the basis functions can be written in terms of the Hermite functions~\eqref{eq:hermite-function} by using the multiplication theorem
\begin{equation*}
  \mathrm{H}_m( b t ) = \sum_{k=0}^{\lfloor m / 2 \rfloor} b^{m - 2k} (b^2 - 1)^k \binom{m}{2k} \frac{(2k)!}{k!} \mathrm{H}_{m - 2k}(t)
\end{equation*}
for Hermite polynomials.
Setting $b = \sqrt{2}$ gives
\begin{equation*}
  \mathrm{H}_m\bigg( \frac{2 t}{\sqrt{3}} \bigg) = 2^{m/2} \sum_{k=0}^{\lfloor m / 2 \rfloor} 2^{-k} \binom{m}{2k} \frac{(2k)!}{k!} \mathrm{H}_{m - 2k}\bigg( \frac{\sqrt{2} \, t}{\sqrt{3}} \bigg),
\end{equation*}
so that
\begin{equation*}
  \psi_{m}(t) = \bigg( \frac{2\sqrt{\pi}}{\sqrt{3}} \bigg)^{1/2} \sqrt{ \frac{2^m m!}{3^m}} \sum_{k=0}^{\lfloor m / 2 \rfloor} \frac{1}{4^k k! \sqrt{(m-2k)!}} \, \varphi_{m-2k}\bigg( \frac{\sqrt{2} \, t}{\sqrt{3}} \bigg).
\end{equation*}
It would be interesting to be able to connect $\psi_{m}$ to the associated Hermite polynomials~\cite{AskeyWimp1984} like the Matérn--Laguerre functions are connected to associated Laguerre functions in \Cref{sec:matern-classification}.

\begin{remark}
Observe that (both here and elsewhere) we have used a basis of $\mathscr{L}_2(\R)$ that is ``compatible'' with the kernel, having the same scaling in the exponential.
That is, the Hermite functions in~\eqref{eq:hermite-function} have the exponential term $e^{-t^2/2}$ and the kernel is $e^{-(t-u)^2/2}$.
For any $\kappa \in (0, \sqrt{2})$, the scaled Hermite functions
\begin{equation*}
        \varphi_{m, \kappa}(t) = \sqrt{ \frac{\kappa}{2^m m! \sqrt{\pi}} } \, e^{-\kappa^2 t^2/2} \mathrm{H}_m( \kappa t )
\end{equation*}
would yield the RKHS basis functions
\begin{equation*}
        \psi_{m, \kappa}(t) = \bigg( \frac{\sqrt{2} \kappa}{a^2} \bigg)^{1/2} \sqrt{ \frac{1}{2^m m!} } \bigg( 1 - \frac{\kappa^2}{a^2} \bigg)^{m/2} e^{-(1 - 1/a^2) t^2} \mathrm{H}_m\bigg( \frac{\kappa t }{a^2 \sqrt{\smash[b]{1 - \kappa^2/a^2}}} \bigg),
\end{equation*}
where $a^2 = 1 + \kappa^2/2$.
\end{remark}

\subsection{Mercer basis and Mehler's formula}

The expansion~\eqref{eq:gaussian-expansion} that we derived for the Gaussian kernel by the use of the basis functions in~\eqref{eq:gaussian-basis} can also be derived by setting
\begin{equation*}
  \rho = \frac{1}{3}, \quad x = \frac{2 t}{\sqrt{3}}, \quad \text{ and } \quad y = \frac{2 u}{\sqrt{3}}
\end{equation*}
in Mehler's formula
\begin{equation*}
  \sum_{m = 0}^\infty \frac{(\rho/2)^m}{m!} \mathrm{H}_m(x) \mathrm{H}_m(y) e^{-(x^2+y^2)/2} = \sqrt{\frac{1}{1-\rho^2}} \exp\bigg( \frac{4xy\rho - (1+\rho^2)(x^2 + y^2)}{2(1-\rho^2)} \bigg)
\end{equation*}
and subsequently multiplying both sides by $e^{-(t^2 + u^2)/3}$.
This suggests that the expansion derived in the preceding section is a special case of the relatively well known Mercer expansion of the Gaussian kernel, \rev{which can also be derived} from Mehler's formula~\cite[Section~12.2.1]{FasshauerMcCourt2015}.
Let $\alpha > 0$ and define the constants
\begin{equation*}
  \beta = \bigg(1 + \frac{2}{\alpha^2} \bigg)^{1/4} \quad \text{ and } \quad \delta^2 = \frac{\alpha^2}{2} ( \beta^2 - 1).
\end{equation*}
The Mercer expansion of the Gaussian kernel with respect to the weight function
\begin{equation*}
  w_\alpha(t) = \frac{\alpha}{\sqrt{\pi}} e^{-\alpha^2 t^2}
\end{equation*}
on the real line is
\begin{equation}
  \label{eq:gaussian-mercer-expansion}
  r(t, u) = \sum_{m=0}^\infty \mu_{m, \alpha} \rev{\vartheta}_{m, \alpha}(t) \rev{\vartheta}_{m, \alpha}(u),
\end{equation}
where
\begin{equation*}
  \mu_{m, \alpha} = \sqrt{ \frac{\alpha^2}{\alpha^2 + \delta^2 + 1/2} } \bigg( \frac{1/2}{\alpha^2 + \delta^2 + 1/2} \bigg)^m
\end{equation*}
are the eigenvalues and
\begin{equation}
  \label{eq:gaussian-basis-mercer}
  \rev{\vartheta}_{m, \alpha}(t) = \sqrt{ \frac{\beta}{2^m m!} } e^{-\delta^2 t^2} \mathrm{H}_m( \alpha \beta t)
\end{equation}
the $\mathscr{L}_2(\R, w_\alpha)$-orthonormal eigenfunctions of the integral operator in~\eqref{eq:mercer-integral-operator}.
By requiring that $\alpha\beta = 2/\sqrt{3}$, so that the Hermite polynomials appearing in~\eqref{eq:gaussian-basis} and~\eqref{eq:gaussian-basis-mercer} have the same scaling, it is straight-forward to solve that
\begin{equation*}
  \psi_{m} = \sqrt{ \mu_{m,\alpha} } \, \rev{\vartheta}_{m,\alpha} = \sqrt{\frac{2}{3^{m+1}}} \, \rev{\vartheta}_{m, \alpha} \quad \text{ when } \quad \alpha = \sqrt{\frac{2}{3}},
\end{equation*}
which shows that the basis~\eqref{eq:gaussian-basis} is a special case of the Mercer basis.
Results of some of the above computations are collected in the following proposition.

\begin{proposition}[Orthogonality of the Gaussian basis]
  \label{prop:gaussian-basis-L2}
  Let $\alpha = \sqrt{\smash[b]{2/3}}$.
  The functions
  \begin{equation*}
    \sqrt{\frac{3^{m+1}}{2}} \, \psi_{m}\rev{(t)} = 2^{1/4} \sqrt{ \frac{1}{2^m m!} }  e^{-t^2/3} \mathrm{H}_m\bigg( \frac{2t}{\sqrt{3}} \bigg) \quad \text{ for } \quad m \in \N_0
  \end{equation*}
  form an orthonormal basis of $\mathscr{L}_2(\R, w_\alpha)$.
\end{proposition}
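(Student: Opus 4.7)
The plan is to exploit the identification $\sqrt{3^{m+1}/2}\,\psi_{m} = \vartheta_{m,\alpha}$ with $\alpha = \sqrt{2/3}$ that is already established in the paragraph immediately preceding the proposition. Since the functions $\vartheta_{m,\alpha}$ are, by Mercer's theorem applied in \cite[Section~12.2.1]{FasshauerMcCourt2015}, the $\mathscr{L}_2(\R,w_\alpha)$-orthonormal eigenfunctions of the integral operator $\mathcal{T}_{r,w_\alpha}$ from~\eqref{eq:mercer-integral-operator}, the conclusion would follow immediately. So at the level of the paper's development this is essentially a restatement of what was just derived.

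If one instead prefers a self-contained verification, I would proceed by direct calculation. Writing out the weighted inner product with $w_\alpha(t) = \sqrt{2/(3\pi)}\,e^{-2t^2/3}$, the three exponential factors $e^{-t^2/3}$, $e^{-t^2/3}$, and $e^{-2t^2/3}$ combine to $e^{-4t^2/3}$. A substitution $u = 2t/\sqrt{3}$ transforms this into $e^{-u^2}$ and rescales the Hermite polynomial arguments from $2t/\sqrt{3}$ to $u$, reducing the integral to the classical orthogonality relation
\begin{equation*}
\int_{-\infty}^\infty e^{-u^2} \mathrm{H}_m(u)\mathrm{H}_n(u)\dif u = 2^n n!\sqrt{\pi}\,\delta_{mn}.
\end{equation*}
The constants $2^{1/4}\sqrt{1/(2^m m!)}\cdot 2^{1/4}\sqrt{1/(2^n n!)}$ from the basis functions, $\sqrt{2/(3\pi)}$ from the weight, and the Jacobian $\sqrt{3}/2$ from the change of variables then collapse against the $2^n n!\sqrt{\pi}$ from the Hermite norm to produce exactly $\delta_{mn}$.

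For completeness (as opposed to merely orthonormality), I would invoke the standard fact that the rescaled Hermite functions $\{e^{-\kappa^2 t^2/2}\mathrm{H}_m(\kappa t)\}_{m=0}^\infty$, with appropriate normalisation, form an orthonormal basis of $\mathscr{L}_2(\R)$ for any $\kappa > 0$ \citep[Theorem~5.7.1]{Szego1939}. Dividing by $\sqrt{w_\alpha(t)}$ realises the map between $\mathscr{L}_2(\R)$ and $\mathscr{L}_2(\R,w_\alpha)$ as an isometric isomorphism, so a complete orthonormal set in the former becomes one in the latter; choosing $\kappa$ so that the remaining Gaussian prefactor coincides with $e^{-t^2/3}$ delivers the stated basis.

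The main obstacle is not conceptual but bookkeeping: carefully tracking the numerical constants so that the exponentials combine exactly into $e^{-u^2}$ after the substitution, and the product of prefactors reduces to $1$ when $m=n$. Since all of this has already been implicitly carried out when identifying $\psi_m$ with $\sqrt{\mu_{m,\alpha}}\,\vartheta_{m,\alpha}$, the shortest honest proof is simply to cite that identification and the Mercer property of $\vartheta_{m,\alpha}$.
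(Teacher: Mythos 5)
Your proposal is correct and its recommended route --- identifying $\sqrt{3^{m+1}/2}\,\psi_m$ with the Mercer eigenfunctions $\vartheta_{m,\alpha}$ for $\alpha=\sqrt{2/3}$ and citing their $\mathscr{L}_2(\R,w_\alpha)$-orthonormality --- is exactly how the paper justifies the proposition, which it presents merely as a collection of ``results of some of the above computations.'' Your supplementary direct verification (the constants do collapse to $\delta_{mn}$, and the isometry $g\mapsto g\sqrt{w_\alpha}$ plus Szeg\H{o}'s completeness theorem for scaled Hermite functions handles the basis property) is a sound self-contained alternative, but it is not needed.
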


Although the Mercer expansion~\eqref{eq:gaussian-mercer-expansion} has been known for some time, apparently originating in~\cite[Section~4]{Zhu1998}, all its derivations in the literature that we are aware of are based on Mehler's formula and integral identities for Hermite polynomials (the only detailed derivations that we know of are given in~\cite[Section~12.2.1]{FasshauerMcCourt2015} and~\cite[Section~5.1]{Gnewuch2022}).
The expansion~\eqref{eq:gaussian-expansion} is therefore the first Mercer expansion for the Gaussian kernel that has been derived from some general principle, which in this case is \Cref{thm:main-theorem}, instead of utilising \emph{ad hoc} calculations.
The relative simplicity of the basis functions~\eqref{eq:gaussian-basis} and the fact that the Hermite functions~\eqref{eq:hermite-function} have the same exponential decay as the kernel suggest that the choice $\alpha = \sqrt{\smash[b]{2/3}}$ for the standard deviation of the Gaussian weight $w_\alpha$ may be in some sense the most natural one.
More discussion on the selection of $\alpha$ may be found in~\cite[Section~5.3]{FasshauerMcCourt2012}.

\subsection{Truncation error}

\begin{figure}[t]
  \centering
  \includegraphics[width=\textwidth]{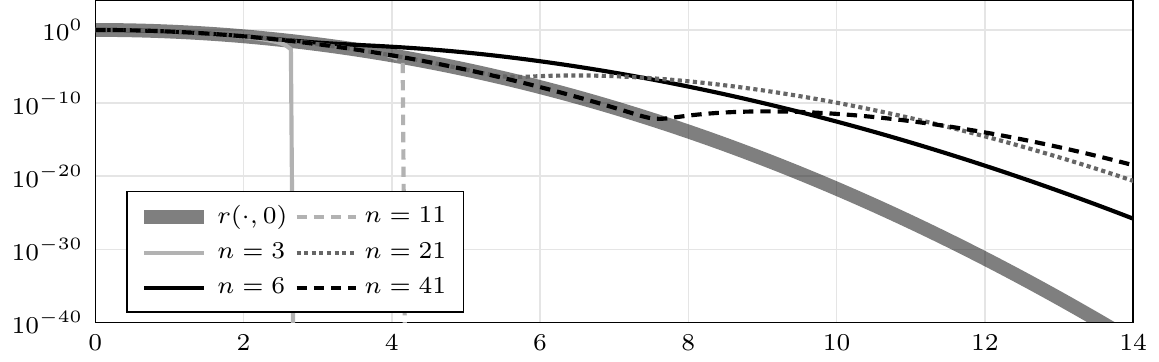}
  \caption{The Gaussian kernel~\eqref{eq:gaussian-kernel} with $u = 0$ and its truncated expansions in~\eqref{eq:gaussian-kernel-truncated}. For $n = 3$ and $n = 11$ the truncated kernels become negative.}
  \label{fig:gaussian-truncation}
\end{figure}

Define the truncated kernel
\begin{equation}
  \label{eq:gaussian-kernel-truncated}
  r_{n}(t, u) = \sum_{m=0}^{n-1} \psi_{m}(t) \psi_{m}(u)
\end{equation}
for any $n \in \N$.
A few truncations are shown in \Cref{fig:gaussian-truncation}.
The truncated kernel converges to the full Gaussian kernel $r$ pointwise on $\R \times \R$.
The following proposition shows that the convergence of~\eqref{eq:gaussian-kernel-truncated} to $r$ is exponential in $\mathscr{L}_2(\R \times \R, w_\alpha \otimes w_\alpha)$.

\begin{proposition}[Gaussian truncation]
  \label{prop:gaussian-truncation-error}
  Let $\alpha = \sqrt{\smash[b]{2/3}}$.
  For every $n \in \N$ it holds that
  \begin{equation*}
    \Bigg( \int_{-\infty}^\infty \int_{-\infty}^\infty (r(t, u) - r_{n}(t, u) )^2 w_\alpha(t) w_\alpha(u) \dif t \dif u \Bigg)^{1/2} = \frac{1}{\sqrt{2}} \, \frac{1}{3^n}.
  \end{equation*}
\end{proposition}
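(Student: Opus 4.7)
The plan is to exploit the orthogonality statement of Proposition~\ref{prop:gaussian-basis-L2} to turn the weighted $\mathscr{L}_2$--norm of the truncation error into a geometric sum.

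First I would rewrite the truncation error as the tail of the expansion,
\begin{equation*}
  r(t,u) - r_n(t,u) = \sum_{m=n}^\infty \psi_m(t) \psi_m(u),
\end{equation*}
using the pointwise convergence of~\eqref{eq:gaussian-expansion}. Squaring and expanding yields a double sum whose weighted integral factors, by Fubini, into
\begin{equation*}
  \sum_{m=n}^\infty \sum_{k=n}^\infty \langle \psi_m, \psi_k \rangle_{\mathscr{L}_2(\R, w_\alpha)}^2.
\end{equation*}
Proposition~\ref{prop:gaussian-basis-L2} tells us that $\{\sqrt{3^{m+1}/2}\,\psi_m\}_{m \in \N_0}$ is orthonormal in $\mathscr{L}_2(\R, w_\alpha)$, so the inner products vanish unless $m=k$ and we are left with $\sum_{m=n}^\infty \norm[0]{\psi_m}_{\mathscr{L}_2(\R, w_\alpha)}^4$. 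The same proposition gives $\norm[0]{\psi_m}_{\mathscr{L}_2(\R, w_\alpha)}^2 = 2/3^{m+1}$.

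Finally, the geometric series computation
\begin{equation*}
  \sum_{m=n}^\infty \bigg(\frac{2}{3^{m+1}}\bigg)^2 = \frac{4}{9}\sum_{m=n}^\infty \frac{1}{9^{m}} = \frac{4}{9} \cdot \frac{9}{8} \cdot \frac{1}{9^n} = \frac{1}{2 \cdot 9^n}
\end{equation*}
delivers the claimed identity after taking the square root. The only nontrivial step is justifying the interchange of summation and integration when squaring the tail; this follows from monotone convergence applied to the partial sums of $\sum_{m=n}^\infty (\psi_m(t)\psi_m(u))^2$ together with absolute convergence of the resulting numerical series, so no real obstacle arises. The overall proof is thus essentially a bookkeeping exercise once Proposition~\ref{prop:gaussian-basis-L2} is in hand.
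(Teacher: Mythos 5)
Your argument is correct and is essentially identical to the paper's own proof: both reduce the weighted $\mathscr{L}_2$ error of the tail to $\sum_{m=n}^\infty \norm[0]{\psi_m}_{\mathscr{L}_2(\R,w_\alpha)}^4$ via the orthogonality in \Cref{prop:gaussian-basis-L2} and then sum the geometric series $\sum_{m=n}^\infty 4/9^{m+1} = \tfrac{1}{2}9^{-n}$. Your closing remark on interchanging sum and integral is in fact more careful than the paper (which simply cites the analogous Matérn computation), though note that monotone convergence does not directly handle the signed cross terms --- the clean justification is that the partial sums converge in $\mathscr{L}_2(\R\times\R, w_\alpha\otimes w_\alpha)$ and Pythagoras applies.
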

\begin{proof}
  As in the proof of \Cref{prop:matern-truncation-error}, we get
  \begin{equation*}
    \int_{-\infty}^\infty \int_{-\infty}^\infty (r(t, u) - r_{n}(t, u) )^2 w_\alpha(t) w_\alpha(u) \dif t \dif u = \sum_{m=n}^\infty \norm[0]{ \psi_{m} }_{\mathscr{L}_2(\R, w_\alpha)}^4.
  \end{equation*}
  By \Cref{prop:gaussian-basis-L2},
  \begin{equation*}
    \sum_{m=n}^\infty \norm[0]{ \psi_{m} }_{\mathscr{L}_2(\R, w_\alpha)}^4 = \sum_{m=n}^\infty \frac{4}{9^{m+1}} = \frac{1}{2} \, \frac{1}{9^n}.
  \end{equation*}
  This completes the proof.
\end{proof}

\section{Conclusion}
In this article, we have demonstrated that \Cref{thm:main-theorem} is a simple and powerful tool for constructing orthonormal expansions of translation-invariant kernels.
In particular, using the Cholesky factor of the Fourier transform of the kernel together with the Laguerre functions led to an interesting decomposition of the RKHS of the Mat\'ern kernel for half-integer smoothness parameters in terms of a finite dimensional space and a Hilbert space of functions vanishing at the origin.
This might be deemed unsatisfying, and a possible avenue to obtaining basis functions for Mat\'erns in a common space would be to investigate constructions based on the symmetric square-root.
The expansion for the Cauchy kernel was derived from the Fourier duality with the Mat\'ern kernel of smoothness $\alpha = 1/2$.
It remains an open problem to find a weighted $\mathscr{L}_2$ space in which the Cauchy basis functions are orthogonal.
For the Gaussian kernel, our construction is a means to reproduce certain Mercer expansions that are typically derived from Mehler's formula.

\section*{Acknowledgements}
FT was partially supported by the Wallenberg AI, Autonomous
Systems and Software Program (WASP) funded by the Knut and Alice Wallenberg Foundation, and
gratefully acknowledge financial support through funds from the Ministry of Science, Research and Arts of the State of Baden-Württemberg.TK was supported by the Academy of Finland postdoctoral researcher grant 338567 ``Scalable, adaptive and reliable probabilistic integration''.
Most of this article was written while TK was visiting the University of Tübingen in May 2022.

\bibliography{references}

\begin{thebibliography}{10}

\bibitem{Adler1990}
{\sc Adler, R.~J.}
\newblock {\em An Introduction to Continuity, Extrema, and Related Topics for
  General {G}aussian Processes}.
\newblock No.~12 in Lecture Notes--Monograph Series. Institute of Mathematical
  Statistics, 1990.

\bibitem{AskeyWimp1984}
{\sc Askey, R., and Wimp, J.}
\newblock Associated {L}aguerre and {H}ermite polynomials.
\newblock {\em Proceedings of the Royal Society of Edinburgh Section A:
  Mathematics 96}, 1--2 (1984), 15--37.

\bibitem{Bell2004}
{\sc Bell, W.~W.}
\newblock {\em Special Functions for Scientists and Engineers}.
\newblock Courier Corporation, 2004.

\bibitem{Christov1982}
{\sc Christov, C.}
\newblock A complete orthonormal system of functions in {$L^2(-\infty,\infty)$}
  space.
\newblock {\em SIAM Journal on Applied Mathematics 42}, 6 (1982), 1337--1344.

\bibitem{DickKuoSloan2013}
{\sc Dick, J., Kuo, F.~Y., and Sloan, I.~H.}
\newblock High-dimensional integration: The quasi-{M}onte {C}arlo way.
\newblock {\em Acta Numerica 22\/} (2013), 133--288.

\bibitem{Erdelyi1954}
{\sc Erdélyi, A.}
\newblock {\em Tables of Integral Transforms. Volume {II}}.
\newblock McGraw-Hill, 1954.

\bibitem{FasshauerHickernell2012}
{\sc Fasshauer, G., Hickernell, F., and Wo{\'z}niakowski, H.}
\newblock On dimension-independent rates of convergence for function
  approximation with {G}aussian kernels.
\newblock {\em SIAM Journal on Numerical Analysis 50}, 1 (2012), 247--271.

\bibitem{FasshauerMcCourt2015}
{\sc Fasshauer, G., and McCourt, M.}
\newblock {\em Kernel-based Approximation Methods using {MATLAB}}.
\newblock No.~19 in Interdisciplinary Mathematical Sciences. World Scientific
  Publishing, 2015.

\bibitem{FasshauerMcCourt2012}
{\sc Fasshauer, G.~E., and McCourt, M.~J.}
\newblock Stable evaluation of {G}aussian radial basis function interpolants.
\newblock {\em SIAM Journal on Scientific Computing 34}, 2 (2012), A737--A762.

\bibitem{Gnewuch2022}
{\sc Gnewuch, M., Hefter, M., Hinrichs, A., and Ritter, K.}
\newblock Countable tensor products of {H}ermite spaces and spaces of
  {G}aussian kernels.
\newblock {\em Journal of Complexity 71\/} (2022), 101654.

\bibitem{Hawkins1989}
{\sc Hawkins, D.~L.}
\newblock Some practical problems in implementing a certain sieve estimator of
  the {G}aussian mean function.
\newblock {\em Communications in Statistics - Simulation and Computation 18}, 2
  (1989), 481--500.

\bibitem{Higgins1977}
{\sc Higgins, J.~R.}
\newblock {\em Completeness and Basis Properties of Sets of Special Functions}.
\newblock No.~72 in Cambridge Tracts in Mathematics. Cambridge University
  Press, 1977.

\bibitem{IrrgeherLeobacher2015}
{\sc Irrgeher, C., and Leobacher, G.}
\newblock High-dimensional integration on $\mathbb{R}^d$, weighted {H}ermite
  spaces, and orthogonal transforms.
\newblock {\em Journal of Complexity 31}, 2 (2015), 174--205.

\bibitem{Karvonen2022}
{\sc Karvonen, T.}
\newblock Small sample spaces for {G}aussian processes.
\newblock {\em Bernoulli 29}, 2 (2023), 875--900.

\bibitem{KimeldorfWahba1970}
{\sc Kimeldorf, G.~S., and Wahba, G.}
\newblock A correspondence between {B}ayesian estimation on stochastic
  processes and smoothing by splines.
\newblock {\em The Annals of Mathematical Statistics 41}, 2 (1970), 495--502.

\bibitem{Kuo1975}
{\sc Kuo, H.-H.}
\newblock {\em Gaussian Measures in {B}anach Spaces}.
\newblock No.~463 in Lecture Notes in Mathematics. Springer, 1975.

\bibitem{Minh2010}
{\sc Minh, H.~Q.}
\newblock Some properties of {G}aussian reproducing kernel {H}ilbert spaces and
  their implications for function approximation and learning theory.
\newblock {\em Constructive Approximation 32}, 2 (2010), 307--338.

\bibitem{NovakUllrich2018}
{\sc Novak, E., Ullrich, M., Wo{\'z}niakowski, H., and Zhang, S.}
\newblock Reproducing kernels of {S}obolev spaces on $\mathbb{R}^d$ and
  applications to embedding constants and tractability.
\newblock {\em Analysis and Applications 16}, 5 (2018), 693--715.

\bibitem{NovakWozniakowski2008}
{\sc Novak, E., and Wo{\'z}niakowski, H.}
\newblock {\em Tractability of Multivariate Problems. Volume {I}: Linear
  Information}.
\newblock No.~6 in EMS Tracts in Mathematics. European Mathematical Society,
  2008.

\bibitem{OwhadiScovel2017}
{\sc Owhadi, H., and Scovel, C.}
\newblock Separability of reproducing kernel spaces.
\newblock {\em Proceedings of the American Mathematical Society 145}, 5 (2017),
  2131--2138.

\bibitem{Paulsen2016}
{\sc Paulsen, V.~I., and Raghupathi, M.}
\newblock {\em An Introduction to the Theory of Reproducing Kernel {H}ilbert
  Spaces}.
\newblock No.~152 in Cambridge Studies in Advanced Mathematics. Cambridge
  University Press, 2016.

\bibitem{RahimiRecht2007}
{\sc Rahimi, A., and Recht, B.}
\newblock Random features for large-scale kernel machines.
\newblock In {\em Advances in Neural Information Processing Systems\/} (2007),
  vol.~20, pp.~1177--1184.

\bibitem{RasmussenWilliams2006}
{\sc Rasmussen, C.~E., and Williams, C. K.~I.}
\newblock {\em Gaussian Processes for Machine Learning}.
\newblock Adaptive Computation and Machine Learning. MIT Press, 2006.

\bibitem{SolinSarkka2020}
{\sc Solin, A., and Särkkä, S.}
\newblock Hilbert space methods for reduced-rank {G}aussian process regression.
\newblock {\em Statistics and Computing 30}, 2 (2020), 419--446.

\bibitem{Stein1999}
{\sc Stein, M.~L.}
\newblock {\em Interpolation of Spatial Data: {S}ome Theory for Kriging}.
\newblock Springer Series in Statistics. Springer, 1999.

\bibitem{Steinwart2019}
{\sc Steinwart, I.}
\newblock Convergence types and rates in generic {K}arhunen-{L}oève expansions
  with applications to sample path properties.
\newblock {\em Potential Analysis 51\/} (2019), 361--395.

\bibitem{SteinwartScovel2012}
{\sc Steinwart, I., and Scovel, C.}
\newblock {M}ercer's theorem on general domains: {O}n the interaction between
  measures, kernels, and {RKHS}s.
\newblock {\em Constructive Approximation 35\/} (2012), 363--417.

\bibitem{Szego1939}
{\sc Szeg\H{o}, G.}
\newblock {\em Orthogonal Polynomials}.
\newblock No.~23 in Colloquium Publications. American Mathematical Society,
  1939.

\bibitem{VanTrees2001}
{\sc Van~Trees, H.~L.}
\newblock {\em Detection Estimation and Modulation Theory: Part I}.
\newblock Wiley-Interscience, 2001.

\bibitem{Wendland2005}
{\sc Wendland, H.}
\newblock {\em Scattered Data Approximation}.
\newblock No.~17 in Cambridge Monographs on Applied and Computational
  Mathematics. Cambridge University Press, 2005.

\bibitem{Wiener1949}
{\sc Wiener, N.}
\newblock {\em Extrapolation, Interpolation, and Smoothing of Stationary Time
  Series: With Engineering Applications}.
\newblock MIT Press, 1949.

\bibitem{Xiu2010}
{\sc Xiu, D.}
\newblock {\em Numerical Methods for Stochastic Computations}.
\newblock Princeton University Press, 2010.

\bibitem{Zhu1998}
{\sc Zhu, H., Williams, C. K.~I., Rohwer, R., and Morciniec, M.}
\newblock {G}aussian regression and optimal finite dimensional linear models.
\newblock In {\em Neural Networks and Machine Learning}, C.~M. Bishop, Ed.,
  vol.~168 of {\em NATO ASI Series. Series F: Computer and Systems Science}.
  Springer, 1998, pp.~167--184.

\bibitem{Zwickngaslc2009}
{\sc Zwicknagl, B.}
\newblock Power series kernels.
\newblock {\em Constructive Approximation 29}, 1 (2009), 61--84.

\end{thebibliography}

\end{document}